      \theoremstyle{plain}
      \newtheorem{thm}{Theorem}[section]
      \newtheorem{propst}[thm]{Proposition}
      \newtheorem{lemma}[thm]{Lemma}
      \newtheorem{cor}[thm]{Corollary}
      \newtheorem{conj}{Conjecture}
      \theoremstyle{definition}
      \numberwithin{equation}{section}
\newcommand\Q{{\mathbb{Q}}}
\title[Artin Presentations of the Trivial Group  and Hyperbolic Closed Pure $3$-Braids]{ Artin Presentations of the Trivial Group and Hyperbolic Closed Pure $3$-Braids}
\author[Armas]{Lorena Armas-Sanabria}
\address{Lorena Armas-Sanabria, UAEM-CONACYT}
\email{lorenaarmas089@gmail.com}
\author[Rodr\'{\i}guez]{Jes\'us Rodr\'{\i}guez Viorato}
\address{Jes\'us Rodr\'iguez Viorato, CONAHCyT CIMAT} 
\email{jesusr@cimat.mx}
\author[Jasso]{E. Fanny Jasso-Hern\'andez}
\address{E. Fanny Jasso-Hern\'andez, 
Mathematics Department, Virginia Tech, Blacksburg, VA 24061-0123, USA}
\email{efjasso@vt.edu}
\keywords{Pure 3-braids, Dehn surgery, Hexatangle,  Artin $n$-presentations of  the trivial group.}
\subjclass[2000]{57M25}
\date{\today}
\begin{document}

\begin{abstract}
We consider a special class of framed links that arise from the hexatangle. Such links are introduced in \cite{AE09}, where it was also analyzed when the 3-manifold obtained after performing integral Dehn surgery on closed pure 3-braids is $S^3$. In the present paper, we analyze  the symmetries of the hexatangle and give a list of Artin $n$-presentations for the trivial group. These presentations correspond to the double-branched covers of the hexatangle that produce $S^3$ after Dehn surgery. Also, using a result of Birman and Menasco  \cite{BM94}, we determine which closed pure $3$-braids  are hyperbolic.
\end{abstract} 

\maketitle

\centerline{\it Dedicated to Professor Gonz\'alez-Acu\~na in his 80th anniversary}

\section{Introduction} \label{intro}

In \cite{AE09} closed pure 3-braids of the form 
$\hat \beta = \widehat {{\sigma_1^{2e_1}\sigma_2^{2f_1}({\sigma_2\sigma_1\sigma_2})^{2e}}}$ were considered; these are shown in Figure \ref{fig4}.
It was determined exactly when an integral surgery in such links produces the 3-sphere; to do it, we used the fact that such links are strongly invertible and that their exteriors are double-branched covers of certain fillings of what was called the hexatangle. Then, it was determined when the trivial knot is obtained by filling the hexatangle. These configurations giving the trivial knot are used in the present paper to obtain Artin $3$-presentations of the trivial group. 

An Artin $n$-presentation is a presentation of a group with generators \break$x_1,\cdots , x_n$
and relations $r_1, r_2,\cdots , r_n$
such that $\prod _{i=1} ^n r_ix_ir_i^{-1} = \prod_{i=1}^n x_i$ is satisfied in the free group  $F(x_1, x_2 \cdots x_n)$. Using Artin presentations Gonz\'alez Acu\~na \cite{GA75} characterized 3-manifold groups. In fact, he  proved that a group $G$ is the fundamental group of a closed, orientable 3-manifold if and only if $G$ admits an Artin n-presentation for some $n$. 

It is interesting to consider Artin $n$-presentations of the trivial group since there are  two conjectures (now theorems) relating them with the Poincar\'e conjecture (proved by 
Perelman). These conjectures were given by Gonz\'alez-Acu\~na \cite{GA75}, \cite{GA00}. 
 We now state such conjectures, but first, we give a  preliminary introduction taken from \cite{GA75}, \cite{GA00}.

Define $S_n$  as follows, $S_n = |x_1,y_1, \cdots,x_n,y_n: \prod _{i=1}^n x_i = \prod _{i=1}^n y_i^{-1}x_iy_i |$ and let $N\subset S_n$ be the normal closure  of $y_1,\cdots ,y_n$ in $S_n$. If $A$ is an Artin $n$-presentation define the automorphism $\phi _A: S_n \rightarrow S_n$ by $\phi _A (x_i) = r_i x_i r_i^{-1}, \phi _A (y_i) = r_iy_i \hskip 5pt i=1,\cdots , n$.

Two Artin $n$-presentations $A, A'$ are equivalent if there exist automorphisms $E_1, E_2$ of $S_n$ such that $E_1 \circ \phi _A = \phi _{A'}\circ E_2$ and $E_i(N) = N, \hskip 5pt i=1,2$.

One can see that $A\sim A'$ implies $M_A$ is homeomorphic to $M_{A'}$, which implies that $|A| \cong |A'|$, where $M_A, M_{A'}$
 are the 3-manifolds whose fundamental groups are given by $A, A'$ respectively. $|A|$ and $|A'|$, denotes the presentations of the groups. Then  the Poincar\'e conjecture is equivalent to
 
\begin{conj}
If $A$ is an Artin $n$-presentation
  such that $|A| = 1$ then $A\sim T$ where 
  $$T = ( x_1, \cdots ,x_n : x_1, \cdots , x_n )$$
\end{conj}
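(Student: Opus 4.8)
\medskip
\noindent\textbf{Proof proposal.} The plan is to deduce the statement from Perelman's theorem via the equivalence, recorded above following Gonz\'alez-Acu\~na \cite{GA75}, \cite{GA00}, between Conjecture 1 and the Poincar\'e conjecture; granting that equivalence, the conjecture is immediate, since the Poincar\'e conjecture is now Perelman's theorem. The substance of a self-contained argument lies entirely in the equivalence, and of its two halves only \emph{Poincar\'e $\Rightarrow$ Conjecture 1} is needed. (The other half is soft: by Gonz\'alez-Acu\~na's realization \cite{GA75} of every closed orientable 3-manifold as some $M_A$ with $\pi_1(M_A)\cong|A|$, a homotopy 3-sphere is an $M_A$ with $|A|=1$; Conjecture 1 then gives $A\sim T$, hence $M_A\cong M_T\cong S^3$, which is the Poincar\'e conjecture.)

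First I would reduce to an assertion about $S^3$. If $A$ is an Artin $n$-presentation with $|A|=1$, then $\pi_1(M_A)\cong|A|=1$, so $M_A$ is a homotopy 3-sphere and $M_A\cong S^3$ by Perelman; and $M_T\cong S^3$ directly from the construction, since for $T$ every relator is $r_i=x_i$ (so the associated monodromy is trivial up to boundary twists and the open book is the standard 3-sphere). Hence $M_A\cong M_T$, and what remains is the purely topological step: \emph{if an Artin $n$-presentation $A$ has $M_A\cong S^3$ then $A\sim T$} --- a step that no longer invokes Perelman.

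For this step I would work with the geometric model of $M_A$: the closed 3-manifold built from $A$ as a planar open book with page a disk with $n$ holes, monodromy induced by $x_i\mapsto r_ix_ir_i^{-1}$, and the canonical closing (equivalently, surgery on an associated framed link). One then needs the dictionary translating the algebraic moves generating $\sim$ --- passing from $\phi_A$ to $E_1\circ\phi_A\circ E_2^{-1}$ for automorphisms $E_1,E_2$ of $S_n$ with $E_i(N)=N$ --- into geometric ones: reorderings and changes of the defining loop system, conjugation of the monodromy, handle slides, and stabilizations. A homeomorphism between $M_A$ and $M_T\cong S^3$ is realized by a finite sequence of such geometric moves, and transporting it through the dictionary gives $A\sim T$. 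The obstacle I expect to be genuine is the bookkeeping of the generator count: the standard calculus runs through open books of larger genus (blow-ups, positive and negative stabilizations), whereas $\sim$ as defined never changes $n$; so one must prove a destabilization lemma guaranteeing that the $S^3$-normal form is reachable without ever leaving $n$ generators. That lemma is the hard core of Gonz\'alez-Acu\~na's equivalence theorem \cite{GA00}; everything else is citation (Perelman; the realization theorem \cite{GA75}) or mechanical checking against the definitions of $S_n$, $N$, $\phi_A$ and the $E_i$.
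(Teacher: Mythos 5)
The paper offers no proof of this statement: it appears only as a conjecture, with the surrounding text remarking that it is ``now a theorem'' because Gonz\'alez-Acu\~na showed it equivalent to the Poincar\'e conjecture and the latter is Perelman's theorem. Your proposal is exactly that deduction and is correct as far as it goes, and you rightly locate the only genuine mathematical content --- the implication $M_A \cong S^3 \Rightarrow A \sim T$, i.e.\ the hard half of Gonz\'alez-Acu\~na's equivalence, which the paper's displayed implication chain ($A\sim A' \Rightarrow M_A\cong M_{A'} \Rightarrow |A|\cong|A'|$) does not supply --- in the cited work \cite{GA75}, \cite{GA00} rather than in anything you actually prove, which matches the paper's own (purely citational) treatment.
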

 
The knot groups in $S^3$ can be characterized. It follows from Artin's work that a group is the group of a knot if and only if it has a presentation:

 $$<y_1,\dots,y_n:s_1y_1s_1^{-1}y_2^{-1},\dots,s_{n-1}y_{n-1}s_{n-1}^{-1}y_n^{-1},s_ny_ns_n^{-1}y_1^{-1}>$$ such that 

\begin{equation}\label{eqn:artin-relation}
\quad \prod_{i=1} ^n s_iy_is_i^{-1}=\prod_{i=1}^{n} y_i  \quad  {\rm in} \quad F(y_1,\dots, y_n)
\end{equation}

If $G=<x_1,x_2\dots,x_n:r_1,r_2,\dots,r_n>$ is an Artin $n$-presentation of the trivial group, then the group given by the presentation   $G=<x_1,x_2\dots,x_n:r_1,r_2,\dots,r_{n-1}>$ is called a \textit{rat-group}. It is to say, a rat-group ( Reduced Artin Trivial ) is a group with a presentation of deficiency 1, obtained as follows: let $( x_1, x_2,...x_n; r_1, r_2, ...,r_n)$ be an Artin $n$-presentation of the trivial group. Then $(x_1,x_2,...,x_n ;r_1, r_2,...,r_{n-1})$ is a rat-group.
 The rat-groups are the knot groups in homotopic spheres.

\begin{conj}
A rat-group is a knot group, i.e., a rat-group has a presentation satisfying formula \ref{eqn:artin-relation}
\end{conj}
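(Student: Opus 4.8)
\medskip
\noindent As noted above, this conjecture is now a theorem; we sketch how it follows from known results. The plan is to reduce it to Perelman's resolution of the Poincar\'e conjecture, using the topological description of rat-groups recalled above. Let $R=\langle x_1,\dots,x_n : r_1,\dots,r_{n-1}\rangle$ be a rat-group, obtained by deleting the last relator from an Artin $n$-presentation $A=(x_1,\dots,x_n;r_1,\dots,r_n)$ of the trivial group. The first step is to make precise the assertion, quoted above, that the rat-groups are exactly the knot groups in homotopy $3$-spheres. Following Gonz\'alez-Acu\~na \cite{GA75}, \cite{GA00}, the presentation $A$ builds a closed orientable $3$-manifold $M_A$ with $\pi_1(M_A)=|A|=1$, so $M_A$ is a homotopy $3$-sphere; deleting the relator $r_n$ amounts to leaving one of the surgeries (equivalently, one $2$-handle of the associated $4$-dimensional handlebody) undone, and what remains is a compact $3$-manifold $X$ with $\partial X$ a torus, sitting in $M_A$ as the exterior of a knot $K$ and having $\pi_1(X)\cong R$. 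Thus $R$ is the group of a knot $K$ in the homotopy $3$-sphere $M_A$.

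Granting this identification, the rest is immediate. Since $M_A$ is a closed, orientable, simply connected $3$-manifold, Perelman's theorem gives a homeomorphism $M_A\cong S^3$, under which $K$ becomes an honest knot in $S^3$; hence $R$ is a classical knot group. Now apply Artin's characterization of knot groups recalled above: a group is the group of a knot in $S^3$ if and only if it has a presentation of the form displayed above satisfying the Artin relation \ref{eqn:artin-relation}. Applied to $R$ this produces the required presentation, which proves the statement.

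The real work lies entirely in the first paragraph, and this is where the content sat before Perelman: one must identify, canonically and for every rat-group, the homotopy sphere $M_A$ and the knot $K$, and check that the fundamental group of that knot exterior is $R$ up to Tietze moves and not merely abstractly isomorphic to it; this is precisely the equivalence between rat-groups and knot groups of homotopy $3$-spheres, due to Gonz\'alez-Acu\~na and independent of the Poincar\'e conjecture. What must be imported from Perelman is exactly the \emph{homeomorphism} $M_A\cong S^3$, not merely a homotopy equivalence: the step from ``knot group in a homotopy sphere'' to ``satisfies \ref{eqn:artin-relation}'' passes through Artin's theorem, which speaks about knots in the genuine $3$-sphere.
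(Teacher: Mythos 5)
The paper does not prove this statement: it is recorded as a conjecture, with only the surrounding remarks that it is equivalent to the Poincar\'e conjecture (hence now a theorem via Perelman) and that ``the rat-groups are the knot groups in homotopic spheres.'' So there is no proof in the paper to compare against, and your proposal should be judged on its own terms. On those terms it is correct in outline, and it is exactly the natural argument for the easy direction of Gonz\'alez-Acu\~na's equivalence: (i) a rat-group is the group of a knot $K$ in the homotopy sphere $M_A$ (Gonz\'alez-Acu\~na \cite{GA75}, \cite{GA00}); (ii) Perelman gives $M_A\cong S^3$; (iii) Artin's characterization then supplies a presentation satisfying formula \ref{eqn:artin-relation}. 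You are appropriately honest that step (i) carries all the weight and is imported, not proved --- which is the same stance the paper takes. The converse direction, where the paper's remark about ``several characterizations of $S^3$'' becomes relevant, is not needed for the statement as posed, so omitting it is fine. One small correction: your insistence that $\pi_1$ of the knot exterior must agree with $R$ ``up to Tietze moves and not merely abstractly isomorphic to it'' is a distinction without a difference --- the statement only asserts that the abstract group \emph{has} a presentation of the required form, and for finitely presented groups Tietze equivalence of presentations is the same thing as isomorphism of the presented groups; so abstract isomorphism in step (i) is all you need.
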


This conjecture is equivalent to the Poincar\'e conjecture. For its formulation, Gonz\'alez-Acu\~na used several characterizations of $S^3$ \cite{GA00}. 

Now, we  recall some well-known facts about $3$-manifolds $M$. A torus $T$  in $M$ is incompressible  if it does not have compression disks, i.e., embedded disks in  $M$ such that the boundary is in $T$  and is nontrivial in $T$. And $T$ is essential if it is incompressible and not boundary parallel  into $\partial M$.
Let $A$ be an annulus properly embedded in $M$. $A$ is essential if it is incompressible and not boundary parallel into $\partial M$.
Thurston proved the following: Let  $M$ be a compact, orientable, connected $3$-manifold with boundary. If $M$ does not have  either essential disks, spheres, annuli or tori then it is hyperbolic. And by Perelman, we know that
if $M$ is closed with no essential spheres or tori, and not Seifert fibered, then $M$ is hyperbolic.
A link  contained in $S^3$ is hyperbolic if its exterior is hyperbolic. A Dehn surgery on a hyperbolic link is exceptional if the 3-manifold obtained is not hyperbolic.
By Thurston Hyperbolic Dehn Surgery Theorem, it follows that most of the surgeries on a hyperbolic link  produce hyperbolic manifolds. We just need to exclude a finite number of slopes in each component. To determine which closed pure $3$-braids in $S^3$ are hyperbolic, we use a result of Birman and Menasco \cite{BM94}. The first author has shown examples
of hyperbolic closed pure $3$-braids which do have a nontrivial surgery producing $S^3$ (\cite{AS04}), some of these are small closed pure 3-braids.

The content of this paper is organized as follows: In Section \ref{hexatangle} we present for the convenience of the reader a short introduction to the hexatangle. In Section \ref{artin}  some examples taken from the tables of the Artin $3$-presentations of the trivial group,  obtained by considering  the symmetries of the hexatangle are presented, and in Section \ref{hiperbolicas} we determine all the hyperbolic, closed pure $3$-braids.

\section{The Hexatangle} \label{hexatangle}

We provide now the basic definitions, notation, and conventions for this paper, all of which are compatible with the ones in \cite{AE09}. In fact, this is a short introduction obtained from \cite{AE09}.

A tangle is a pair $(B,A)$, where $B$ is $S^3$ with the interior of a finite number of disjoint 3-balls removed, and  $A$ is the disjoint union of properly embedded arcs in $B$, with $\partial A \neq \emptyset$ such that $A$ intersects each component of $\partial B$ in four points. A marking of a tangle is an identification, of the points of intersection of $A$ in each component of $\partial B$, with exactly one of $\{ NE, NW, SW, SE\}$. A tangle $(B,A)$ with a marking is called a marked tangle.

We say that two marked tangles $(B,A_1)$ and $(B, A_2)$ are \textit{equivalent} if there is a homeomorphism of $B$, that fixes $\partial B$, and that takes $A_1$ to $A_2$ while preserving the marking.

A \textit{trivial $n$-tangle} is a tangle that is homeomorphic to the pair $(D^2, \{x_1, \ldots x_n \}) \times I$, where $x_i$ are points in the interior of $D^2$.

A \textit{rational tangle} is a marked 2-tangle that is homeomorphic (as an unmarked tangle) to the trivial 2-tangle. Rational tangles can be parametrized by $\Q \cup \{ 1/0 \}$.  We denote the rational tangle corresponding to
$p/q\in Q\cup \{1/0\}$ by $R(p/q)$, the convention we are using is that the representation of $p/q$ as continued fraction:
$$
\frac{p}{q} = a_n+ \frac{1}{ a_{n-1}+ \frac{1} {... + \frac{1}{a_1} } } 
$$
corresponds to one of the tangles in Figure \ref{tangle-convention}(a) or (b), according to the parity of $n$ (see \textit{e.g.} \cite{Ka96}).
\begin{center}
\begin{figure}[ht]
\includegraphics[height=5.5cm]{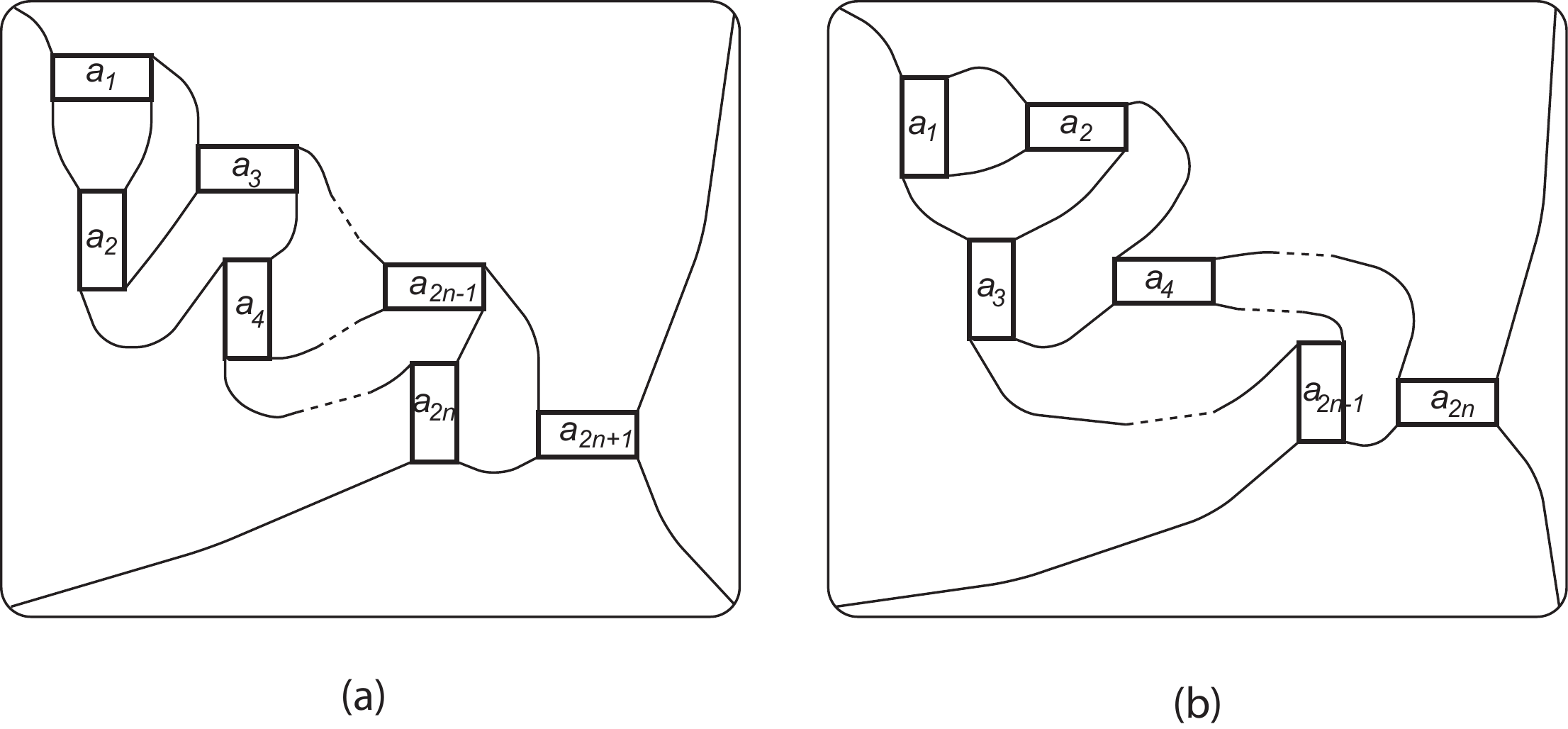}
\caption{Conventions for representing $p/q$}
\label{tangle-convention}
\end{figure}
\end{center}

The hexatangle is a marked tangle that produces a family of links obtained by filling the six 3-balls with integral tangles as in Figure \ref{hexat}.  Strictly speaking, a version of the hexatangle was first visualized in  \cite{Co69} where it was called basic polyhedra $6^*$ and $6^{**}$,  this was presented with a different marking. Although, Conway was more interested in these tangles to classify knots and links and to determine some relationships between the corresponding Conway polynomials.

\begin{center}
\begin{figure}[ht]
\includegraphics[height=6cm]{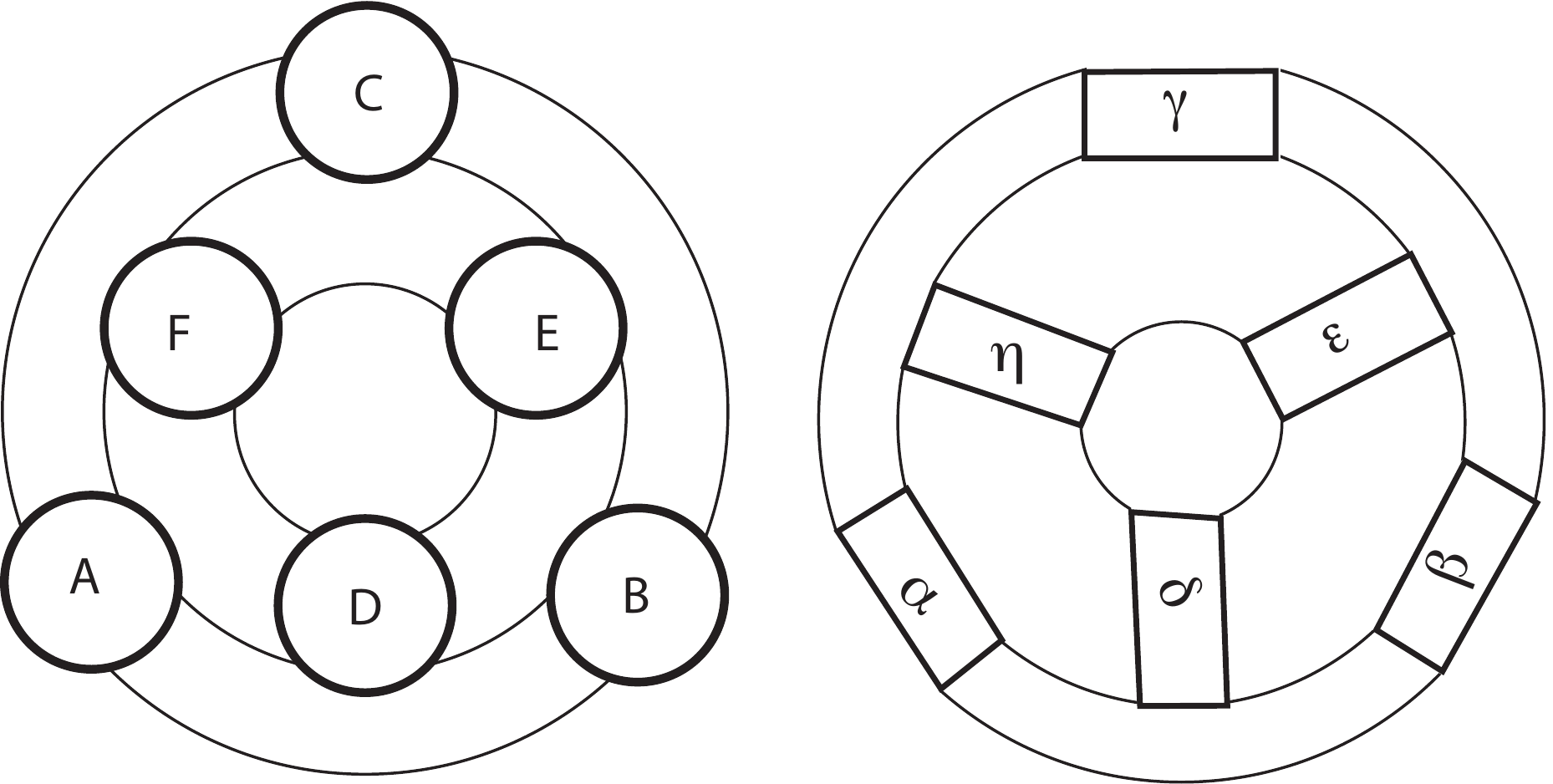}
\caption{The hexatangle}
\label{hexat}
\end{figure}
\end{center}

In \cite{AE09} the hexatangle is introduced as such, with the aim of giving information about integral Dehn surgery on the closure of certain  pure 3-braids $\widehat{\beta}$. 

Consider now the family of closed pure 3-braids of the form
$\hat \beta = \widehat{\sigma_1^{2e_1}\sigma_2^{2f_1}({\sigma_2\sigma_1\sigma_2})^{2e}}$, (see Figure \ref{fig4}).

\begin{center}
\begin{figure}[ht]
\includegraphics[height=6cm]{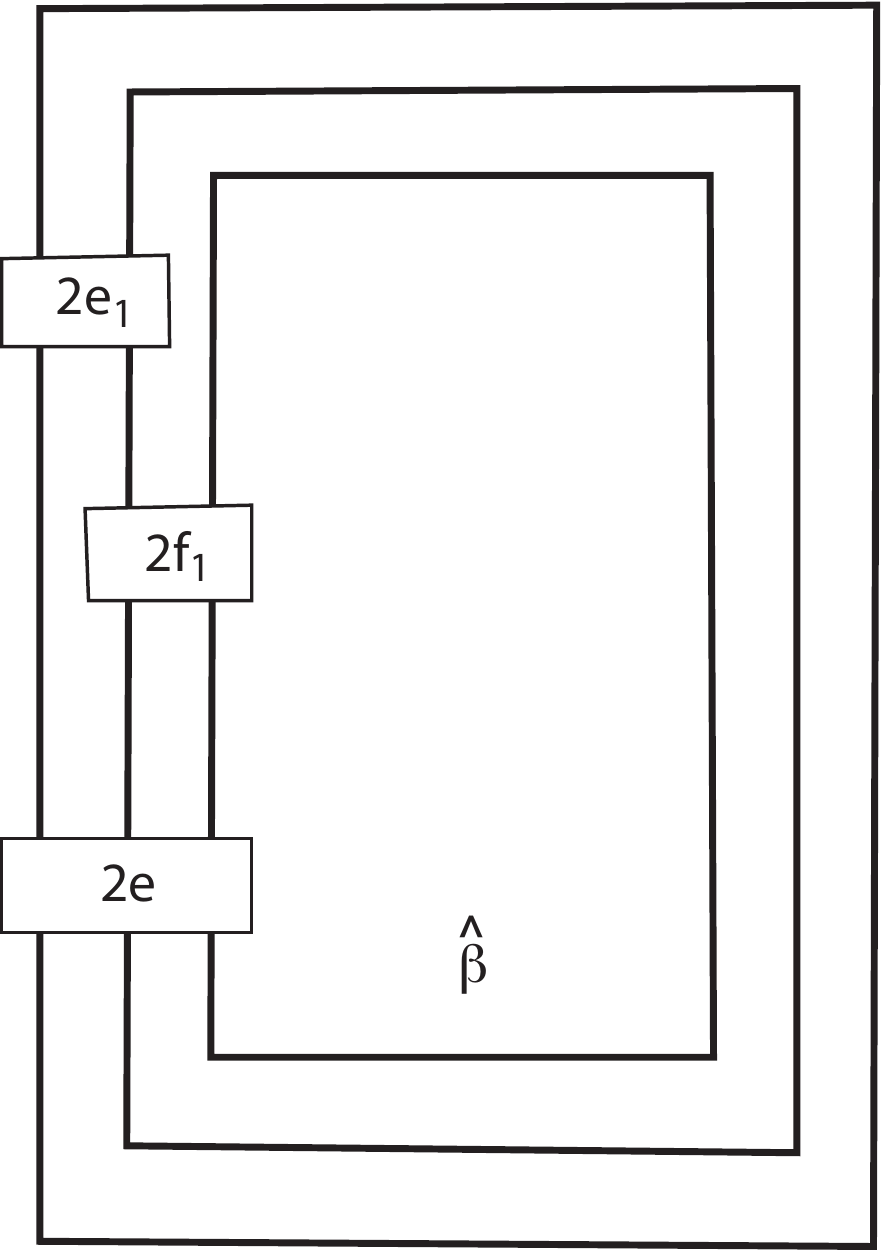}
\caption{Small  closed pure $3$-braids}
\label{fig4}
\end{figure}
\end{center}

\begin{center}
\begin{figure}[ht]
\includegraphics[height=6cm]{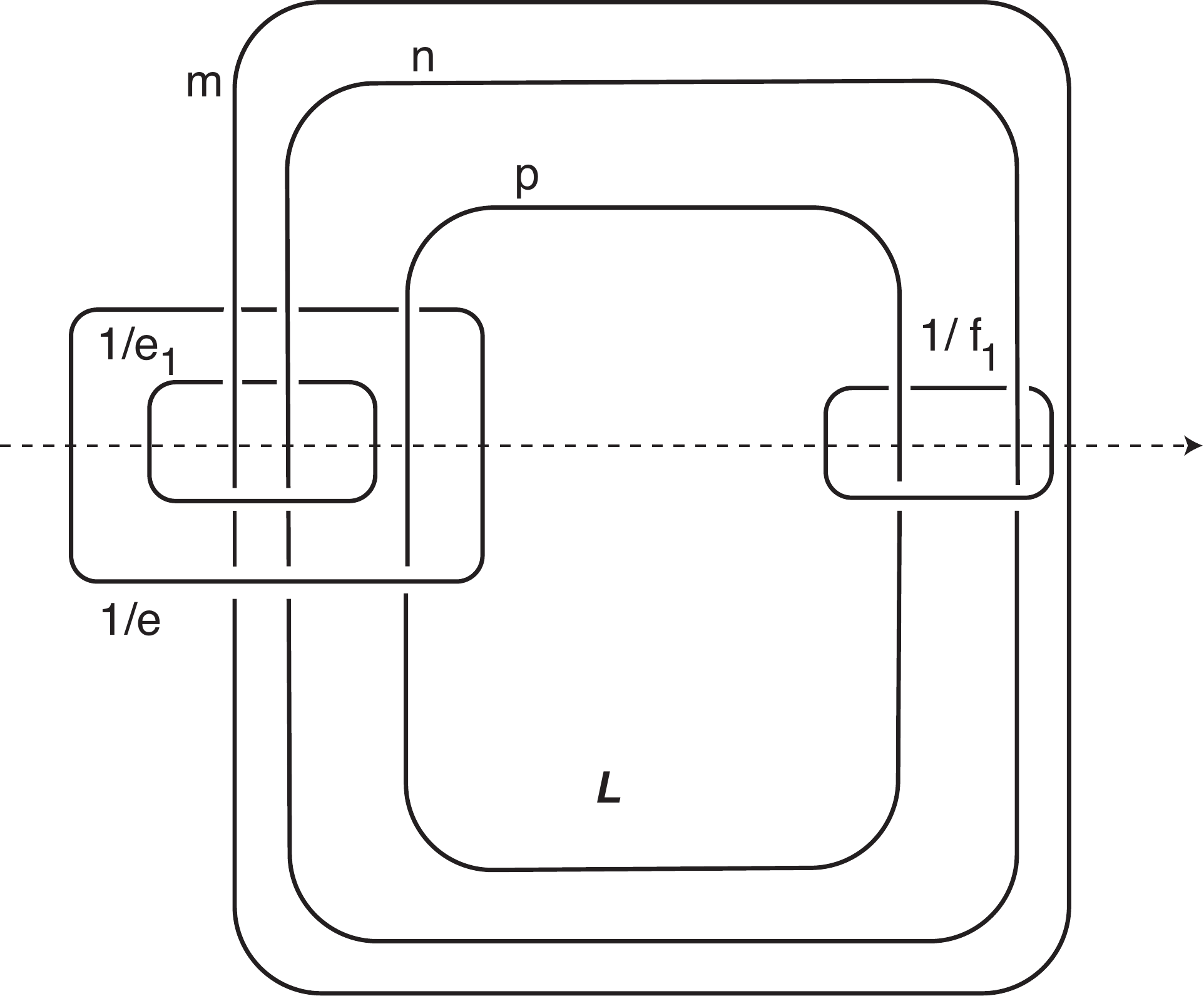}
\caption{Punctured $S^3$}
\label{fig:puncturedS3}
\end{figure}
\end{center}

The link $\hat {\beta}$ can be obtained by Dehn surgery over the link $\mathcal{L}$ of Figure \ref{fig:puncturedS3}, which is strongly invertible. An involution axis is shown in the same figure. The quotient of the exterior of $\hat{\beta}$ under this involution will be a punctured $S^3$, together with arcs, which arise as the image of the involution axis. The hexatangle is a class of framed links obtained by rearranging such punctured $S^3$ and making conventions of marked tangles. In \cite{AE09} the hexatangle is introduced, and it is determined precisely when an integral surgery in such links produces the 3-sphere.

\begin{center}
\begin{figure}[ht]
\includegraphics[height=6cm]{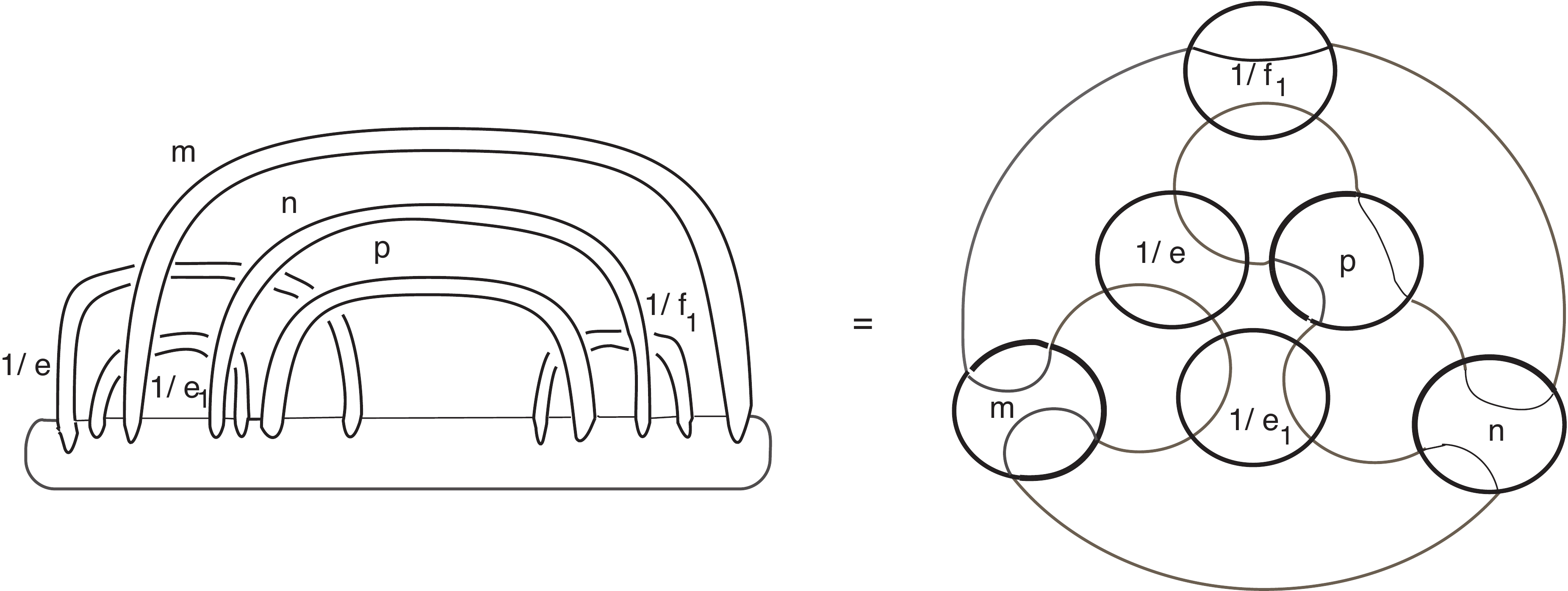}
\caption{The isotopy showing the origin of the hexatangle}
\label{fig5}
\end{figure}
\end{center}

The quotient of $\hat{\beta}$ under the involution is a tangle (see Figure \ref{fig5}), where its boundary components come from the tori boundary components of the exterior of $\hat{\beta}$, and the arcs are the image of the involution axis. We choose the marking given by the image of a framing on the components of $\hat{\beta}$ as shown in Figure \ref{fig5}. This is indicated 
in Figure \ref{hexat} by a rectangular box, where the short sides of the rectangle represent the axis $NW-SW$ and $NE-SE$, and the long sides represent the axis $NW-NE$ and $SW-SE$. In all of our pictures, the shape of the rectangle will always be clear.
We call this marked tangle the {\it Hexatangle}, and denote it by $\mathcal{H}$,
or $\mathcal {H}(*,*,*,*,*,*)$. The capital letters $A, B, C, D, E, F$ denote boundary components in the hexatangle, and $\alpha,\beta,\gamma,\delta,\epsilon,\eta$ denote fillings of the hexatangle 
with rational tangles. We refer to the sphere boundary components of $\mathcal {H}$, filled or unfilled, as  {\it boxes}. We say that two  boxes are  {\it adjacent} if there is an arc of $\mathcal {H}$ connecting them; otherwise, we will call them {\it opposite boxes}.
In the hexatangle, each box is opposite to exactly one box and adjacent to four boxes.
We consider $\alpha,\beta,\gamma,\delta,\epsilon,\eta$ as rational parameters that we use to filling the corresponding box with a rational tangle. Note that when we fill the boxes with integral tangles, we just replace each box with a sequence of horizontal crossings.

Note that filling one of the components $A$, $B$, $E$ with a rational tangle $\mathcal {R}(p/q)$ will correspond in the double branched cover to do $(-p/q)$-Dehn surgery on the corresponding component. On the other hand, filling with $\mathcal {R}(p/q)$ in one of the components $C$, $D$, $F$ corresponds to doing $q/p$-Dehn surgery in the corresponding component  in the double branched cover (see \cite{M75}), this because of our rational tangles convention (see \cite{Co69}). So, we can consider integral fillings
in all boundary components of the hexatangle and forget the correspondence  with the components of $\hat{\beta}$.
The hexatangle has many symmetries. Note that the hexatangle can be embedded in a tetrahedron so that each box is in correspondence with an edge of the tetrahedron; thus, each symmetry of the tetrahedron will give a symmetry of the hexatangle preserving framings. 
In \cite{AE09}, it is given a list of fillings on the hexatangle that produces the trivial knot up to symmetries, the list is complete up to the symmetries given by the tetrahedron and mirror images. In the present paper we are listing each symmetry of the hexatangle  and some representative Artin $3$-presentations of the trivial group that arise after each Dehn filling (we are not listing all the Artin $3$-presentations obtained, because of lack of space).

In \cite{AE09}, the main theorem (\ref{thm:main}) states the following:

\begin{thm}
\label{thm:main}
Suppose an integral  filling of the hexatangle produces the trivial knot,
then the parameters are exactly as shown in Tables \ref{tab:table1}, \ref{tab:table2} and \ref{tab:table3}, up to symmetries.
\end{thm}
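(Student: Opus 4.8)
The strategy is to convert the statement about tangle fillings into one about Dehn surgery, via the double branched cover. Because the hexatangle $\mathcal{H}$ is, by construction, the quotient of the exterior of a strongly invertible link under an involution, the double branched cover of $\mathcal{H}$ with its six spherical boundary components left unfilled is the exterior of a $6$-component link $L\subset S^3$: the six boxes lift to the six components of $L$, and filling a box with $R(p/q)$ becomes, in the cover, the surgery slope recorded in Section~\ref{hexatangle} ($-p/q$ on the components over $A,B,E$ and $q/p$ on those over $C,D,F$). A filling $\mathcal{H}(\alpha,\beta,\gamma,\delta,\epsilon,\eta)$ is a knot or link in $S^3$; by the solution of the Smith conjecture, an orientation-preserving involution of $S^3$ with nonempty fixed point set has fixed set a single unknotted circle, so $\mathcal{H}(\alpha,\dots,\eta)$ is the trivial knot if and only if its double branched cover --- the manifold obtained from $L$ by the corresponding integral surgery --- is $S^3$. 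Thus the theorem is equivalent to classifying all integral surgeries on $L$ that yield $S^3$.

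I would then exploit the symmetry. As noted above, $\mathcal{H}$ embeds in a tetrahedron with boxes corresponding to edges, so it carries the full tetrahedral symmetry group together with the mirror symmetry, acting on the six boxes the way the symmetries of a tetrahedron act on its six edges, and $L$ inherits this action. This reduces the problem to orbit representatives of the filling configurations, which I would organize by the number of boxes receiving a degenerate filling (a $0$- or $\infty$-tangle, i.e.\ a trivial or meridional slope on $L$). The very degenerate configurations are handled directly: filling enough boxes degenerately collapses $\mathcal{H}$ to a rational tangle or a sum of rational tangles, and Conway's tangle arithmetic says precisely which of the remaining integral parameters close up to the unknot; these account for some of the families in Tables~\ref{tab:table1}--\ref{tab:table3}.

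The main case is the fillings with few or no degenerate boxes. Here $L$, and the sublinks obtained once the degenerate fillings have been performed, are hyperbolic or are small link exteriors whose exceptional surgeries are already classified in the literature (the magic manifold and Whitehead-link type exteriors are the ones that naturally appear). I would peel off the components one at a time, invoking the standard constraints: a nontrivial knot in $S^3$ admits no $S^3$ surgery (Gordon--Luecke), its reducible and cyclic surgeries are restricted by the cyclic surgery theorem, and the exceptional surgeries on the small hyperbolic pieces are known. At each stage only finitely many slopes on the remaining link can still produce $S^3$, and for those finitely many surviving slopes an explicit computation --- Rolfsen twists and Kirby moves on the surgery description of $L$, transported back through the branched cover to the tangle --- pins down exactly when $S^3$ results. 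Assembling the surviving parameters over all orbit representatives yields precisely Tables~\ref{tab:table1}, \ref{tab:table2} and \ref{tab:table3}. (Alternatively one can argue entirely downstairs, decomposing the $6^{*}$-polyhedral diagram box by box, but the branched-cover bookkeeping is cleaner.)

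The main obstacle is the size and branching of the case analysis rather than any single deep input. One must verify that the tetrahedral-plus-mirror symmetry genuinely identifies all the cases claimed, that no one-parameter family of $S^3$-producing surgeries is overlooked when components are peeled off, and --- the subtle point --- that the $S^3$'s found in the double branched cover really correspond to the \emph{trivial} knot upstairs rather than merely to some link with the same cover; this last step is where strong invertibility and the Montesinos correspondence between Dehn surgery and tangle replacement must be used with care. The individual Kirby-calculus checks for the surviving slopes are routine but, taken together, constitute most of the work.
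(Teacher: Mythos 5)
First, a point of order: the paper you were asked to match does not actually contain a proof of Theorem~\ref{thm:main}. The theorem is quoted verbatim from \cite{AE09} (``In \cite{AE09}, the main theorem states the following''), and the present article only consumes its conclusion --- the tables --- as input for the Artin presentations of Section~\ref{artin}. So there is no internal argument to compare yours against, and your proposal has to be judged on its own terms as a reconstruction of the proof in \cite{AE09}.

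On those terms, your reduction is sound and is indeed the standard one: a filling of $\mathcal{H}$ lifts to an integral surgery on the six-component link covering the boxes, Smith theory forces the branch set to be connected whenever the cover is $S^3$, and the Smith conjecture then upgrades ``double branched cover is $S^3$'' to ``the branch set is the trivial knot''; the tetrahedral-plus-mirror symmetry reduction is likewise exactly what is used. The gap is that beyond this correct framing you have not proved anything: the entire content of the theorem is the exhaustive determination of the $132$ rows of Tables~\ref{tab:table1}--\ref{tab:table3}, and your outline defers all of it to ``routine Kirby-calculus checks'' without completing a single case or giving a criterion that guarantees the case tree terminates with exactly these families. Worse, there is an internal inconsistency that would derail the plan as stated: you assert that after peeling off components ``at each stage only finitely many slopes on the remaining link can still produce $S^3$,'' yet the tables contain many infinite one-parameter families (rows with a free $\gamma$, $\delta$, $\beta$ or $\epsilon$). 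These arise precisely at the non-generic stages where a partial filling turns some component into a $\pm1$-framed unknot that can be blown down, absorbing a parameter, or where the intermediate manifold becomes toroidal, reducible or Seifert fibered; there the Gordon--Luecke and cyclic-surgery finiteness arguments you invoke simply do not apply, and one must instead identify those degenerate fillings explicitly and analyze each resulting family by hand. An argument organized around ``finitely many surviving slopes at each stage'' would miss exactly the infinite families that make up a large fraction of the tables, so this part of the strategy must be restructured, not merely flagged as a subtlety.
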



\begin{table}[!ht]
    \tiny
    \centering
    \begin{tabular}{|c|c|c|c|c|c|c|}
    \hline
     & $\eta$ & $\beta$ & $\alpha$ & $\delta$ & $\epsilon$ & $\gamma$  \\
    \hline
     1& 0 & $\pm 1$ & $\pm 1$ & 0 & 0 & $\pm 1$  \\
     \hline
     2& 0 &0 & $\pm 1$ & 0 & $\pm 1$ & $\pm 1$ \\
    \hline
     & 0 & 1 & $\pm 1$ & 0 & -3 & -2  \\
    \hline
     & 0 &1    & $\pm 1$ & 0   & -2  & -3  \\
    \hline
    & 0 & 1  & $\pm 1$ & 0   & -1  & $\gamma$     \\
    \hline
    & 0 & 1   & $\pm 1$ & 0   & $\epsilon$  & -1     \\
    \hline
    & 0 & -2   & $\pm 1$ & 0   &  1 & -3  \\
    \hline
    & 0 & -3  & $\pm 1$ & 0   &  1 &  -2    \\
    \hline
    & 0 & $\beta$  & $\pm 1$ & 0   &  1 &  -1    \\
    \hline
    10 & 0 & -1  &$\pm 1$ & 0   & 1  & $\gamma$     \\
    \hline
    & 0 & 0 & 1 & 1 & -1 & -2  \\
    \hline
     & 0 &0    & 1 & 1   & -2  & -1  \\
    \hline
    & 0 & 0  & 1 & -1   & $\pm 1-\gamma$  & $\gamma$     \\
    \hline
    14& 0 & 0   &  -1 &  1   & $\pm 1 -\gamma$  & $\gamma$     \\
    \hline
    
    \end{tabular}
    \quad
     \begin{tabular}{|c|c|c|c|c|c|c|}
     
     \hline
     & $\eta$ & $\beta$ & $\alpha$ & $\delta$ & $\epsilon$ & $\gamma$  \\
    \hline
    15& 0 & 0   & -1 & -1   &  1 & 2  \\
    \hline
    & 0 & 0  & -1 & -1   &  2 &  1    \\
    \hline
    & 0 & 0  & 1 & -1   &  1 &  -2    \\
    \hline
     & 0 & 0  &1 & -2   & 1  & -1     \\
    \hline
     & 0 & 0   &  1  & $\pm 1 - \gamma$  & -1  & $\gamma$    \\
    \hline
    20 & 0 & 0   &  -1  & $\pm 1 - \gamma$  &  1 &  $\gamma$   \\
    \hline
    & 0 & 0   &  -1  & 1  & -1  &   2  \\
    \hline
     & 0 & 0   &  -1  & 2  & -1  &   1  \\
    \hline
    & 0 & 0   &  1  &  -2 & -1  &  1   \\
    \hline
    & 0 & 0   &  1  & -1  & -2  &   1  \\
    \hline
    & 0 & 0   &  1  & $\delta$  & $\pm 1 -\delta$  &  -1   \\
    \hline
    & 0 & 0   &  -1  &  $\delta$ & $\pm 1 -\delta$  &  1   \\
    \hline
    & 0 & 0   &  -1  & 2  & 1  &   -1  \\
    \hline
    28& 0 & 0   &  -1  &  1 & 2  &  -1   \\
    \hline
    
    \end{tabular}
    \caption{Integer fillings of the hexatangle that produce the trivial knot, part 1 of 3}
    \label{tab:table1}
\end{table}

\begin{table}[!ht]
    \tiny   
    \centering
     \begin{tabular}{|c|c|c|c|c|c|c|}
    
    \hline
     & $\eta$ & $\alpha$ & $\beta$ & $\gamma$ & $\delta$ & $\epsilon$  \\
    \hline 
    1&0  & 1   &  -1  &$\gamma $  &-1  & $\pm 1-\gamma  $ \\
    \hline
     & 0 &  1  & -1   &-2  &-2   &  -3  \\
    \hline 
    &0  &  1  & -1   &-3  & -2  & -2   \\
    \hline 
    & 0 &  1  &  -1  &-1  & -3  & -2   \\
    \hline 
    &0  & 1   & -1   &-2  &-3   &-1    \\
    \hline 
    &0  & 1   & -1   & -1 & -4  &-1    \\
    \hline 
    &0  & 1   & -1   &$\gamma $ & -2  & -1   \\
    \hline 
    & 0 & 1   & -1   & -1 & -2  &$ \epsilon $  \\
    \hline 
    & 0 & 1   & -1   &1  &1   & 2   \\
    \hline 
    10&0  &1    & -1   &2  &1   & 1   \\
    \hline 
    &0  & 1   &-1    &1  & 2  & 1   \\
    \hline 
    & 0 & 1   &$ \beta $  &$\gamma $ &-1   &$\pm 1-\gamma$    \\
    \hline 
    & 0 &  1  &   -3 & 1 & -2  &    $\epsilon$ \\
    \hline 
    & 0 &   1 &  -3  & $\gamma$ &  -2 & 1   \\
    \hline 
    &  0& 1   &-3    &3  & -2  & 2   \\
    \hline 
    & 0 & 1   & -3   & 2 &-2   &3    \\
    \hline 
    &0  & 1   & -2   &2  &-3   &1    \\
    \hline 
    & 0 & 1   &  -2  & 1 &-3  & 2   \\
    \hline 
     & 0 &  1  &  -1  & 1 & 2  & 1   \\
    \hline 
    20& 0 & 1   &-1    & 2 & 1  & 1   \\
    \hline 
     &  0& 1   & -2   &$\gamma$  &$-3-\gamma $  &1    \\
    \hline 
    & 0 &  1  & -3   & $\gamma $& -2  & 1   \\
    \hline 
      & 0   & 1   &$\beta $ &-1   &-2 &1    \\
    \hline 
    &0  &  1  &$\beta $   &-2  &-1   &1    \\
    \hline 
    &0  & 1   &-3    &-2  & $\delta $ &1    \\
    \hline 
    & 0 & 1   &  -2  &$ \gamma$ &$-\gamma -1 $  &  1  \\
    \hline 
    &0  & 1   &-3    &-3  & -4  &  1  \\
    \hline 
    &0  & 1   &-3    &-4  & -3  & 1   \\
    \hline 
    & 0 &  1  &  -4  &-2  &-3   &1    \\
    \hline 
    
    30 &0  & 1   &-4    &-3  & -2  &1    \\
    \hline 
     &  0&  1  & -5   &-2  &  -2 & 1   \\
    \hline 
    32& 0 &  1  &$\beta $   &1  &-2   &-1    \\
    \hline 
    \end{tabular}
    \quad
     \begin{tabular}{|c|c|c|c|c|c|c|}
    \hline
     & $\eta$ & $\alpha$ & $\beta$ & $\gamma$ & $\delta$ & $\epsilon$  \\
    \hline
    33& 0 &  1  & -1   &$\gamma $ & -2  & -1   \\
    \hline 
    & 0 &1    & 1   &3  &2   & -1   \\
    \hline 
    & 0 & 1   & 2   &2  & 1  & -1   \\
    \hline 
    & 0 & 1   & 1   &4  & 1  & -1   \\
    \hline 
    & 0 & 1   & $ \beta$  & 2 & -1  & -1   \\
    \hline 
    &0  & 1   & 1   & 2 &$\delta $  & -1   \\
    \hline 
     &0  &1    & -1   &-1  & -4  & -1   \\
    \hline 
    40 & 0 &   1 &-2    & -1 &-2   & -1   \\
    \hline 
     & 0 &  1  &  -1  &-2  & -3  &-1    \\
    
    \hline 
     & 0 &  1  &  -1  & 1 & 2  & 1   \\
    \hline 
    & 0 & 1   &-1    & 1 & 1  & 2   \\
    \hline 
     &  0& 1   & -2   &1  &$-3-\epsilon $  &$\epsilon$    \\
    \hline 
    & 0 &  1  & -3   & 1 & -2  & $\epsilon$   \\
    \hline 
      & 0   & 1   &$\beta $ &1   &-2 &-1    \\
    \hline 
    &0  &  1  &$\beta $   & 1 &-1   &-2    \\
    \hline 
    &0  & 1   &-3    &1  & $\delta $ &-2    \\
    \hline 
    & 0 & 1   &  -2  &1 &$-\epsilon -1 $  &  $\epsilon$  \\
    \hline 
    50&0  & 1   &-3    &1  & -4  &  -3  \\
    \hline 
    &0  & 1   &-3    &1  & -3  & -4   \\
    \hline 
    & 0 &  1  &  -4  &1  &-3   &-2    \\
    \hline 
    
     &0  & 1   &-4    & 1 & -2  &-3    \\
    \hline 
     &  0&  1  & -5   &1  &  -2 & -2   \\
    \hline 
    & 0 &  1  &$\beta $   &-1  &-2   &1    \\
    \hline 
    & 0 &  1  & -1   &-1 & -2  & $\epsilon$   \\
    \hline 
    & 0 &1    & 1   &-1  &2   & 3   \\
    \hline 
    & 0 & 1   & 2   &-1  & 1  & 2   \\
    \hline 
    & 0 & 1   & 1   &-1  & 1  & 4   \\
    \hline 
    60& 0 & 1   & $ \beta$  & -1 & -1  & 2   \\
    \hline 
    &0  & 1   & 1   & -1 &$\delta $  & 2   \\
    \hline 
     &0  &1    & -1   &-1  & -4  & -1   \\
    \hline 
     & 0 &   1 &-2    & -1 &-2   & -1   \\
    \hline 
    64 & 0 &  1  &  -1  &-1  & -3  &-2    \\
    \hline
    
    \end{tabular}
    \caption{Integer fillings of the hexatangle that produce the trivial knot, part 2 of 3}
    \label{tab:table2}
\end{table}

\begin{table}[!ht]
    \tiny
    \centering
     \begin{tabular}{|c|c|c|c|c|c|c|}
    
    \hline
     & $\delta$ & $\eta$ & $\alpha$ & $\beta$ & $\gamma$ & $\epsilon$  \\
    \hline
    1& -1 & 1   &  1  &$ \beta$ &-2   & 2   \\
    \hline 
    &-1  & 1   &1    & -1 &-4   &2    \\
    \hline 
    & -1 & 1   & 1   &2  & -1  & 2   \\
    \hline 
    &-1  & 1   & 1   & -2 &-3   & 2   \\
    \hline 
    & -1 & 1   &-1    &2  &$ \gamma$  & -2   \\
    \hline 
    & -1 &   1 &-1    & 4 &1   & -2   \\
    \hline 
    &-1  & 1   & -1   &1  & -2  &  -2  \\
    \hline 
    & -1 & 1   &  -1  &  3&2   &-2    \\
    \hline 
    &-1  & 1   & 2   & 2 & -1  &1    \\
    \hline 
    10 & -1 &  1  &2    & -1 & -2  & 1   \\
    \hline 
    &-1  & 1   & 2   & 1 &-2   &1    \\
    \hline 
    &-1  &1    &-2    &1  &-2   &  -1  \\
    \hline 
    & -1 &1    &-2    &2  & 1  & -1   \\
    \hline 
    & -1 & 1   & -2   & 2 & -1  &-1    \\
    \hline 
    & -1 & 1   &  $\alpha $ &$-\alpha +1 $ &-2   &1    \\
    \hline 
    &-1  &1    &  2  & 3 &-2   &  3  \\
    \hline 
    &-1  & 1   & 2   & 5 & -2  & 2	   \\
    \hline 
    &-1  & 1   &3    & 4 & -2  & 2   \\
    \hline 
    & -1 & 1   &  1  & 3 &-2   & 4   \\
    \hline 
    20 & -1 &  1  & 1   &4  &-2   & 3   \\
    \hline
    
    \end{tabular}
    \quad
    \begin{tabular}{|c|c|c|c|c|c|c|}
    
    \hline
     & $\delta$ & $\eta$ & $\alpha$ & $\beta$ & $\gamma$ & $\epsilon$  \\
    \hline 
    21& -1 &  1  &  -1  &2  &-2   & $\epsilon $  \\
    \hline 
    &-1  & 1   & 1   &$\beta$  & -2  & 2   \\
    \hline
    &-1 & 1   &$\alpha$   &3  &-2   & 2   \\
    \hline
    & -1 & 1   & $\alpha $  &$3-\alpha $ &-2   & 1   \\
    \hline
    & -1 & 1   &-1   & 1 &-2   & -2   \\
    \hline
    &-1  & 1   &  -2  &1  & -2  &  -1  \\
    \hline
    & -1 & 1   &  1  & 2 &-2   & 3   \\
    \hline
    &-1  & 1   &  1  & 2 &  -2 & 4   \\
    \hline
     &-1  & 1   & 1   &2  & -2  &$ \epsilon $  \\
    \hline
    30& -1 & 1   &$ \alpha$   &2  &$-1-\alpha $  & -1   \\
    \hline
    &-1  &1    & -2   &2  &  -3 & -3   \\
    \hline
    &-1  & 1   & -2   &2  &-5   &-2    \\
    \hline
    & -1 &  1  & -3   &2 & -4  & -2   \\
    \hline
    &-1  & 1   &   -1 &2  &-3   & -4   \\
    \hline
    &-1  & 1   &  -1  &2  &-4   & -3   \\
    \hline
    &-1  &1    &$\alpha $   &1  & -3  &-2    \\
    \hline
    &-1  & 1   &   -1 &2  &$\gamma $  &-2    \\
    \hline
    & -1 &  1  & $ \alpha $ &2  &$-3-\alpha$   &-1    \\
    \hline
    &-1  & 1   & 1   &2  & -1  & 2   \\
    \hline
    40& -1 & 1   & 2   & 2 &-1   &  1  \\
    \hline
    
     \end{tabular}
    
    \caption{Integer fillings of the hexatangle that produce the trivial knot, part 3 of 3}
    \label{tab:table3}
\end{table}

\begin{lemma}
The double branched cover of $\mathcal{H}(\alpha,\beta,\gamma,\delta,\epsilon,\eta)$ is obtained by $(-\alpha-\delta-\eta,-\beta-\delta-\gamma-\eta,-\epsilon-\gamma-\eta)$-Dehn surgery on the closed pure 3-braid $(\sigma_1)^{-2\delta}(\sigma_2)^{-2\gamma}(\sigma_2\sigma_1\sigma_2)^{-2\eta}$.
\end{lemma}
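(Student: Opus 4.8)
The plan is to unwind the construction of the hexatangle from \cite{AE09} and then apply the Montesinos trick, converting the tangle fillings into Dehn fillings of the double branched cover. The starting point is the fact, established in \cite{AE09}, that the double branched cover of the hexatangle (with all six boxes unfilled) over its arcs is the exterior of the six-component link $\mathcal L$ of Figure \ref{fig:puncturedS3}: namely, $\mathcal L$ is the closure of the trivial $3$-braid, with strands $K_A,K_B,K_E$, together with three unknotted circles, one --- associated with the box $D$ --- encircling $K_A$ and $K_B$, one --- associated with the box $C$ --- encircling $K_B$ and $K_E$, and one --- associated with the box $F$ --- encircling all three strands, while the boxes $A,B,E$ are the strands $K_A,K_B,K_E$ themselves. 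Thus the six boxes are in bijection with the six components of $\mathcal L$, with $\alpha,\beta,\epsilon$ carried by $K_A,K_B,K_E$ and $\gamma,\delta,\eta$ carried by the three encircling circles.

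Next I would apply the theorem of Montesinos \cite{M75} that the double branched cover of a $3$-ball over a rational tangle is a solid torus; consequently, filling a box of the hexatangle with a rational tangle corresponds, in the double branched cover, to a Dehn filling of the corresponding boundary torus of the exterior of $\mathcal L$. Using the dictionary recalled in Section \ref{hexatangle} --- filling an $A,B,E$-box with $\mathcal R(p/q)$ gives $(-p/q)$-surgery, filling a $C,D,F$-box with $\mathcal R(p/q)$ gives $(q/p)$-surgery --- and the fact that the integer tangle $m$ is the rational tangle $\mathcal R(m/1)$, I conclude that the double branched cover of $\mathcal H(\alpha,\beta,\gamma,\delta,\epsilon,\eta)$ is the result of Dehn surgery on $\mathcal L$ with coefficients $-\alpha,-\beta,-\epsilon$ on $K_A,K_B,K_E$ and $1/\delta,1/\gamma,1/\eta$ on the three encircling unknots (with $1/0=\infty$, the trivial filling).

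Finally I would eliminate the three encircling unknots by Rolfsen twists. Deleting the $1/\delta$-framed circle around $K_A,K_B$ inserts $(\sigma_1)^{-2\delta}$ and lowers the surgery coefficients of $K_A$ and $K_B$ by $\delta$; deleting the $1/\gamma$-framed circle around $K_B,K_E$ inserts $(\sigma_2)^{-2\gamma}$ and lowers the coefficients of $K_B$ and $K_E$ by $\gamma$; deleting the $1/\eta$-framed circle around all three strands inserts $(\sigma_2\sigma_1\sigma_2)^{-2\eta}$ and lowers all three coefficients by $\eta$. Since $(\sigma_2\sigma_1\sigma_2)^{2}$ is central in the braid group $B_3$ and since reordering the twist regions replaces the braid by a conjugate --- hence the closed braid by an isotopic link --- the underlying link becomes the closed pure $3$-braid $(\sigma_1)^{-2\delta}(\sigma_2)^{-2\gamma}(\sigma_2\sigma_1\sigma_2)^{-2\eta}$; and because $K_A$ meets only the first and third circles, $K_B$ meets all three, and $K_E$ meets only the second and third, the surgery coefficients have become $-\alpha-\delta-\eta$, $-\beta-\delta-\gamma-\eta$ and $-\epsilon-\gamma-\eta$, which is exactly the assertion.

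The hard part here is not any individual step but the bookkeeping of conventions underlying all of them: one must fix, compatibly with \cite{AE09} and with the rational-tangle convention of Figure \ref{tangle-convention}, exactly which box is glued to which component of $\mathcal L$, the signs in the tangle-to-surgery dictionary, and the sign of the framing change effected by a Rolfsen twist; one must also check, from the explicit picture of $\mathcal L$ in Figure \ref{fig:puncturedS3}, that the three encircling circles can be simultaneously isotoped into standard twist-box position, so that the Rolfsen twists yield the braid word precisely in the stated form rather than in some conjugate of it. Once these conventions are in place, the computation above is routine.
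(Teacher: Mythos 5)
Your proposal is correct and follows essentially the same route as the paper's (much terser) proof: identify the double branched cover of the unfilled hexatangle with the exterior of $\mathcal{L}$, translate the integral tangle fillings into surgery coefficients $-\alpha,-\beta,-\epsilon$ and $1/\delta,1/\gamma,1/\eta$ via the Montesinos dictionary, and then blow down the three $1/n$-framed unknots by Rolfsen twists to produce the braid word and the shifted coefficients. The only difference is that you make explicit the bookkeeping (which circle encircles which strands, and the framing change under each twist) that the paper compresses into the single sentence ``after performing surgery $(1/\gamma,1/\eta,1/\delta)$ on $\mathcal{L}$ \dots the surgery coefficients change to \dots''.
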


\begin{proof}
By observing  the hexatangle (see Figure \ref{hexat}), we have the following correspondences:
 
$$\alpha \rightarrow -m,\hskip 6pt \beta \rightarrow -n,\hskip 6pt \gamma \rightarrow f_1, \hskip 6pt \eta \rightarrow e, \hskip 6pt\delta\rightarrow e_1, \hskip 6pt \epsilon \rightarrow -p$$
 
This means that the double branched cover of $\mathcal{H}=(\alpha,\beta,\gamma,\delta,\epsilon,\eta)$ is obtained by Dehn surgery on $\mathcal{L}(-m,-n,1/\gamma,1/\delta,-p,1/\eta)$. After performing surgery $(1/\gamma,1/\eta,1/\delta)$ on $\mathcal{L}$, we get the closed pure 3-braid $(\sigma_1)^{-2\delta}(\sigma_2)^{-2\gamma}(\sigma_2\sigma_1\sigma_2)^{-2\eta}$, and the surgery coefficients change to $(-\alpha-\delta-\eta,-\beta-\delta-\gamma-\eta,-\epsilon-\gamma-\eta)$. 
\end{proof}

\section{Artin $n$-Presentations of the Trivial Group}\label{artin}

When we consider a group $G = <x_1 x_2,\cdots , x_n : r_1, r_2, \cdots , r_n>$, we say that this is an F-Artinian $n$-presentation if the following equation is satisfied in the free group $F(x_1, x_2, \cdots , x_n)$:
$$\prod r_ix_i r_i^{-1} = \prod x_i$$
 F-Artinian $n$-presentations were defined by Gonz\'alez-Acu\~na in the 70's, see \cite{GA75}.
We say that a W-Artinian $n$-presentation is taken if the following equation is satisfied in the free group $F(x_1, x_2, \cdots , x_n)$:
$$\prod r_i^{-1}x_i r_i = \prod x_i$$
In this paper, we are considering W-Artinian $n$-presentations.

These kinds of presentations appear when we consider the fundamental group of a closed, connected, and orientable $3$-manifold, given by an open-book decomposition with planar pages.

From this open-book decomposition of the $3$-manifold, its fundamental group can be read. Just read each relation described by  certain disjoint simple curves $r_i$ for $i=1,..., n$. As an example, we give the figure \ref{fig:fig6}.

\begin{center}
\begin{figure}[ht]
\includegraphics[height=8cm]{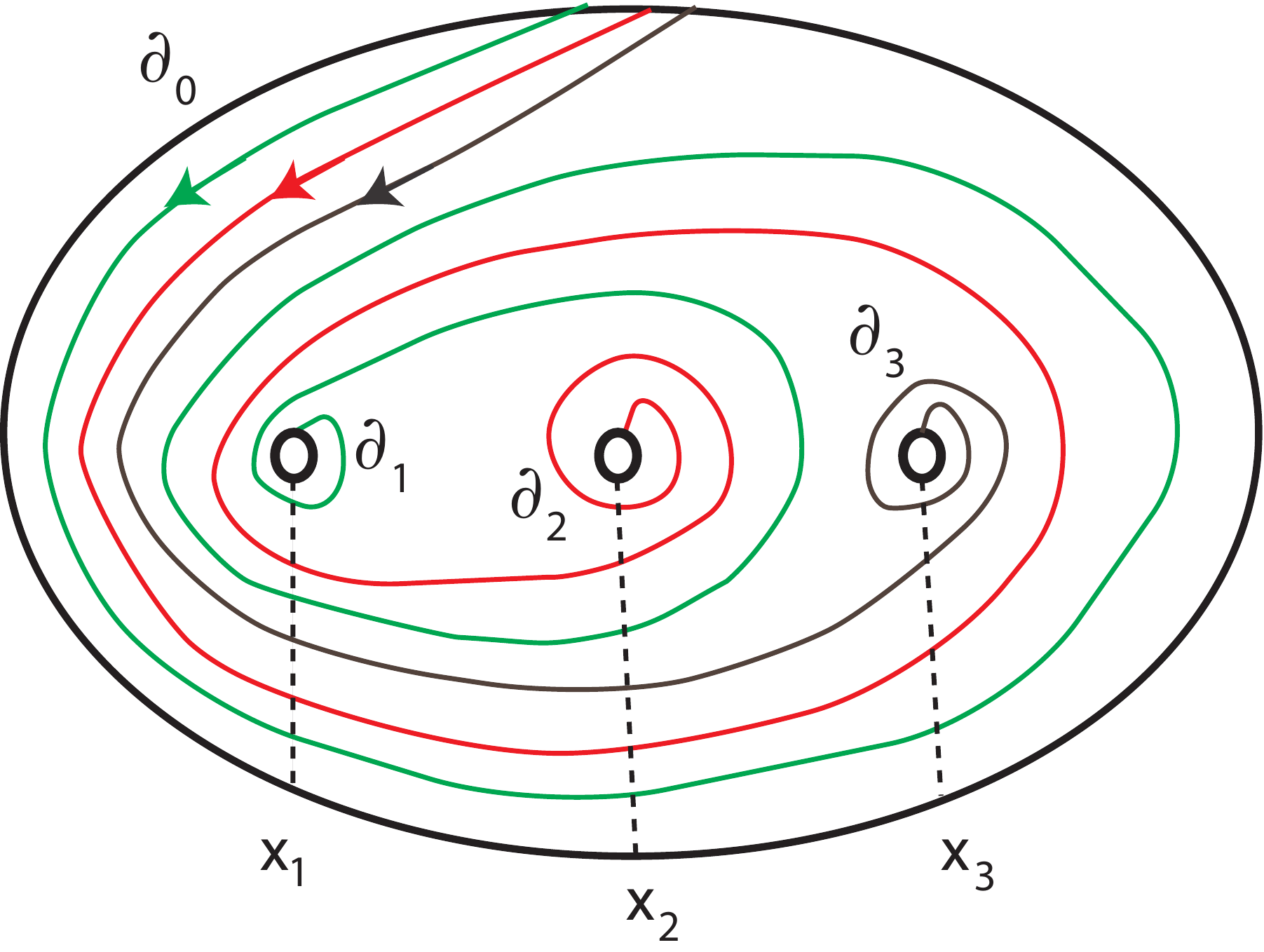}
\caption{The open book decomposition with planar page of a 3-manifold}
\label{fig:fig6}
\end{figure}
\end{center}
The relations are the following:
$$r_1 = x_1x_2x_3x_1x_2x_1$$
$$r_2= x_1x_2x_3x_1x_2^2$$ and $$r_3= x_1x_2x_3^2$$

Since all closed, connected, and orientable $3$-manifold can be described in this way ( see \cite{GA75}),  there is no loss of generality in  considering these kinds  of presentations when we study fundamental groups of such  $3$-manifolds.

Now, we consider the symmetries of the hexatangle. Since the hexatangle can be seen as a tetrahedron, there are $24$ symmetries. The symmetries are as shown in Table \ref{tab:table4}, where for example, symmetry $3$ is saying that $\alpha \rightarrow \gamma, \beta \rightarrow
\alpha, \gamma \rightarrow \beta, \delta \rightarrow \eta, \eta \rightarrow \epsilon$ and $\epsilon \rightarrow \delta$. It is meaning that $\alpha$ is taking the value of $\gamma$, $\beta$ is taking the value of $\alpha$, and so on in Table \ref{tab:table4}. 
\begin{table}[!ht]
    \centering
    \begin{displaymath}
    \begin{array}{|r|c|c|c|c|c|c|}
    \hline
    \mathrm{Symmetry }& \alpha & \beta &\gamma &\delta& \eta &\epsilon  \\
     & & & & & & \\
    \hline 
    \hline 
     1& \alpha & \beta & \gamma & \delta & \eta & \epsilon  \\
    \hline 
    2& \beta &\gamma  &\alpha  &\epsilon  &\delta  &\eta  \\
    \hline 
     3& \gamma &\alpha  &\beta  &\eta  &\epsilon  &\delta  \\
    \hline 
    4&\gamma &\epsilon  &\eta  &\beta  & \alpha &\delta  \\
    \hline 
     5&\eta  & \delta & \alpha & \epsilon &\gamma  &\beta  \\
    \hline 
     6&\beta  &\delta  &\epsilon  &\alpha  &\gamma  &\eta  \\
    \hline 
     7&\delta  & \alpha &\eta  &\beta  &\epsilon  &\gamma  \\
    \hline 
     8&\delta  &\epsilon  &\beta  & \eta &\alpha  &\gamma  \\
    \hline 
     9&\eta  & \gamma &\epsilon  &\alpha  & \delta & \beta \\
    \hline 
     10&\alpha  &\eta  &\delta  &\gamma  & \beta & \epsilon\\
    \hline 
     11& \epsilon &\beta  &\delta &\gamma  & \eta &\alpha  \\
    \hline 
     12&\epsilon  & \eta &\gamma  &\delta  &\beta  & \alpha \\
    \hline 
     13&\alpha  &\gamma  &\beta  &\eta  &\delta  &\epsilon  \\
    \hline 
     14&\beta  &\alpha  &\gamma  &\delta  &\epsilon  &\eta  \\
    \hline 
     15&\gamma  &\beta  & \alpha &\epsilon  &\eta  & \delta \\
    \hline 
     16&\gamma  &\eta  & \epsilon &\alpha  &\beta & \delta \\
    \hline 
     17& \eta &\alpha  &\delta  & \gamma & \epsilon & \beta \\
    \hline 
     18&\beta   &\epsilon  & \delta &\gamma  & \alpha &\eta  \\
    \hline 
     19&\delta  &\eta  &\alpha  & \epsilon & \beta &\gamma  \\
    \hline 
     20&\delta  & \beta &\epsilon  &\alpha  &\eta  &\gamma  \\
    \hline 
     21&\eta  &\epsilon  &\gamma  &\delta &\alpha  &\beta  \\
    \hline 
     22&\alpha  & \delta &\eta  &\beta  &\gamma  & \epsilon \\
    \hline 
     23&\epsilon  &\delta  &\beta  &\eta  &\gamma  &\alpha  \\
    \hline 
     24&\epsilon &\gamma  &\eta  & \beta &\delta  &\alpha  \\
    \hline
      \end{array}
     \end{displaymath} 
    \caption{Symmetries of the hexatangle.}
    \label{tab:table4}
\end{table}
     
In \cite{AS12}, the following theorem is proved:
\begin{thm}
Let $ \hat{\beta} \in S^3$ be the closed pure 3-braid of $\beta= \Delta^{2e} {(\sigma_1 ^2)}^{e_1}{(\sigma_2 ^2)}^{f_1}$ where $\Delta = \sigma_1\sigma_2\sigma_1 = \sigma_2\sigma_1\sigma_2$ with $ e, e_1,$ $f_1$ $\in \mathbb{Z}$. Then the
3-manifold obtained by Dehn surgery on $\hat{\beta}$ with an integral framing $(m, n, p)$ has the Artin $3$-presentation given by the generators $x_1$, $x_2$, $x_3$ and relations:
\begin{align*}
r_1 &= x_1^{m-e-e_1} (x_1(x_2x_3)^{-f_1} x_2 (x_2 x_3)^{f_1})^{e_1} (x_1 x_2 x_3) ^e\\
r_2 &= x_2^{n-e-e_1-f_1} (x_2 x_3)^{f_1}(x_1 (x_2 x_3)^{-f_1} x_2(x_2 x_3)^{f_1})^{e_1}(x_1 x_2 x_3)^ e\\
r_3 &= x_3^{p-e-f_1} (x_2 x_3)^{f_1}(x_1 x_2 x_3)^e 
\end{align*}
\end{thm}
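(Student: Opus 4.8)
The plan is to read the fundamental group of the surgered manifold off the open-book (mapping-torus) structure of the closed-braid complement, and then to recognize the resulting presentation as the asserted one. Let $\Sigma$ denote the four-holed sphere (a disk with three open disks removed) and let $x_1,x_2,x_3$ freely generate $\pi_1(\Sigma)$, carried by loops around the three inner boundary circles and normalized so that the outer boundary represents $x_1x_2x_3$. The complement $S^3\setminus(\hat\beta\cup A)$ of $\hat\beta$ together with its braid axis $A$ fibers over $S^1$ with fiber $\Sigma$ and monodromy the Artin action $\beta_\ast$ of $\beta$ on $\pi_1(\Sigma)=F(x_1,x_2,x_3)$; hence $\pi_1\bigl(S^3\setminus(\hat\beta\cup A)\bigr)$ is generated by $x_1,x_2,x_3$ together with a stable letter $t$ that realizes $\beta_\ast$ by conjugation. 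In this model a meridian of $\hat\beta_i$ is $x_i$, a meridian of $A$ is $t$, and the braid-disk (page) longitude of $\hat\beta_i$ is $t\,u_i$, where $u_i\in F(x_1,x_2,x_3)$ is the conjugator in $\beta_\ast(x_i)=u_i\,x_i\,u_i^{-1}$ (the orientation of $t$ is the one for which $t\,u_i$ commutes with the meridian $x_i$, as it must on the peripheral torus).

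The first computational step is to determine $\beta_\ast$ for $\beta=\Delta^{2e}(\sigma_1^2)^{e_1}(\sigma_2^2)^{f_1}$. Since $\Delta^2$ is central in $B_3$, $\beta_\ast$ is the composite of three commuting automorphisms of $F(x_1,x_2,x_3)$: the block $(\sigma_1^2)^{e_1}$ acts as conjugation by $(x_1x_2)^{e_1}$ on $\langle x_1,x_2\rangle$ and trivially on $x_3$; the block $(\sigma_2^2)^{f_1}$ acts as conjugation by $(x_2x_3)^{f_1}$ on $\langle x_2,x_3\rangle$ and trivially on $x_1$; and $\Delta^{2e}$ acts as conjugation by $(x_1x_2x_3)^e$. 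Composing these in the order prescribed by the braid word and extracting the conjugating elements produces exactly the tails
\[
u_1=\bigl(x_1(x_2x_3)^{-f_1}x_2(x_2x_3)^{f_1}\bigr)^{e_1}(x_1x_2x_3)^{e},\qquad u_2=(x_2x_3)^{f_1}\,u_1,\qquad u_3=(x_2x_3)^{f_1}(x_1x_2x_3)^{e}
\]
occurring in $r_1,r_2,r_3$. The second input is the framing bookkeeping: the coefficients $(m,n,p)$ refer to the Seifert framings of the components in $S^3$, and from the braid word one reads that the page framing of $\hat\beta_i$ exceeds the Seifert framing by $c_1=e+e_1$, $c_2=e+e_1+f_1$, $c_3=e+f_1$ (strand $1$ meets only the blocks $\sigma_1^{2e_1}$ and $\Delta^{2e}$, strand $2$ meets all three blocks, strand $3$ meets only $\sigma_2^{2f_1}$ and $\Delta^{2e}$); thus the Seifert longitude of $\hat\beta_i$ is $x_i^{-c_i}\cdot t\,u_i$.

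Now assemble the manifold: $M$ is obtained from $S^3\setminus(\hat\beta\cup A)$ by the trivial filling of $A$, which kills $t$, together with the three Dehn surgeries, which kill $x_i^{\,s_i}\cdot x_i^{-c_i}\,t\,u_i$ with $(s_1,s_2,s_3)=(m,n,p)$. Setting $t=1$ turns the fibration relations into $[x_i,u_i]=1$ and turns the surgery relations into $r_i:=x_i^{\,s_i-c_i}u_i=1$ (after the harmless conjugation $x_i^{s_i}u_ix_i^{-c_i}\sim x_i^{\,s_i-c_i}u_i$); since the relations $r_i$ force $u_i$ to be a power of $x_i$ in the quotient, the relations $[x_i,u_i]=1$ become redundant and may be discarded. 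What remains is $\pi_1(M)=\langle x_1,x_2,x_3\mid r_1,r_2,r_3\rangle$ with $r_i=x_i^{\,s_i-c_i}u_i$, which is the presentation in the statement; that $(r_1,r_2,r_3)$ satisfies the Artin product identity then follows from a short direct computation using the explicit nested form of the $u_i$ together with the fact that $\beta_\ast$ fixes $x_1x_2x_3$. The step I expect to be the main obstacle is exactly the convention bookkeeping in the middle paragraph — the sense in which each $\sigma_i$ acts, the order in which the factors of $\beta$ act on $\pi_1(\Sigma)$, the sign in $\Delta^2_\ast$, and the precise comparison of page and Seifert framings — because the placement of the sub-words inside each $u_i$ and the exponents $m-e-e_1$, $n-e-e_1-f_1$, $p-e-f_1$ all depend on fixing these correctly; abelianizing $r_i$ and matching it against $s_i\,[x_i]+\sum_{j\neq i}\mathrm{lk}(\hat\beta_i,\hat\beta_j)\,[x_j]$ is the consistency check that pins the signs down.
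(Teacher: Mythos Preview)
The paper does not give its own proof of this theorem; it quotes the result from \cite{AS12} and only records a sign correction in the $f_1$--exponents. The surrounding discussion in Section~\ref{artin} (reading the Artin relations as the curves $r_i$ on the planar page of an open-book decomposition) is exactly the method you outline, so your proposal is the intended one; your flagging of the convention bookkeeping as the delicate step is on point, since that is precisely where the cited source went wrong.
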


In \cite{AS12} there is a mistake in the relations $r_1$ and $r_2$ in the part of the relations $(x_1(x_2x_3)^{f_1}x_2(x_2x_3)^{-f_1}$ where the signs of the exponent $f_1$ were interchanged. Here this mistake is corrected. From this theorem and from Lemma 2.2 it follows

\begin{cor} The Artin $3$- presentation associated to the double branched cover of $\mathcal{H}(\alpha,\beta,\gamma,\delta,\epsilon,\eta)$ is

 $r_1 = x_1^{-\alpha} (x_1(x_2x_3)^{\gamma } x_2 (x_2 x_3)^{-\gamma })^{-\delta } (x_1 x_2 x_3) ^{-\eta} $

 $r_2 = x_2^{-\beta} (x_2 x_3)^{-\gamma }(x_1 (x_2 x_3)^{\gamma } x_2(x_2 x_3)^{-\gamma })^{-\delta }(x_1 x_2 x_3) ^{-\eta}$

 $r_3 = x_3^{-\epsilon } (x_2 x_3)^{-\gamma }(x_1 x_2 x_3) ^{-\eta}$
  
\end{cor}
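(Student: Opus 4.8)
The corollary is a direct substitution of Lemma~2.2 into the Theorem of \cite{AS12} (as corrected). The plan is to match the two parametrizations and read off the presentation. First I would recall from Lemma~2.2 that the double branched cover of $\mathcal{H}(\alpha,\beta,\gamma,\delta,\epsilon,\eta)$ equals the result of $(m,n,p)$-Dehn surgery on the closed pure $3$-braid $\beta = \Delta^{2e}(\sigma_1^2)^{e_1}(\sigma_2^2)^{f_1}$, where the identifications are
\[
e = -\eta,\qquad e_1 = -\delta,\qquad f_1 = -\gamma,
\]
and the integral framing is $(m,n,p) = (-\alpha-\delta-\eta,\;-\beta-\delta-\gamma-\eta,\;-\epsilon-\gamma-\eta)$. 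This is exactly the content of the proof of Lemma~2.2: surgering on three components of $\mathcal{L}$ yields that braid, and the surgery coefficients on the remaining three components transform as stated.

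Next I would substitute these values into the (corrected) relations $r_1, r_2, r_3$ from the Theorem. For $r_1$ the leading exponent becomes $m - e - e_1 = (-\alpha-\delta-\eta) - (-\eta) - (-\delta) = -\alpha$; the conjugation exponents $f_1$ become $-\gamma$, so the block $(x_1(x_2x_3)^{-f_1}x_2(x_2x_3)^{f_1})^{e_1}$ becomes $(x_1(x_2x_3)^{\gamma}x_2(x_2x_3)^{-\gamma})^{-\delta}$; and $(x_1x_2x_3)^e$ becomes $(x_1x_2x_3)^{-\eta}$. This gives precisely the stated $r_1$. Similarly for $r_2$ the leading exponent is $n - e - e_1 - f_1 = (-\beta-\delta-\gamma-\eta) - (-\eta)-(-\delta)-(-\gamma) = -\beta$, the factor $(x_2x_3)^{f_1} = (x_2x_3)^{-\gamma}$, and the remaining blocks transform as before, yielding the stated $r_2$. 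For $r_3$ the leading exponent is $p - e - f_1 = (-\epsilon-\gamma-\eta)-(-\eta)-(-\gamma) = -\epsilon$, and $(x_2x_3)^{f_1}(x_1x_2x_3)^e = (x_2x_3)^{-\gamma}(x_1x_2x_3)^{-\eta}$, giving the stated $r_3$. Since the Theorem guarantees that $(r_1,r_2,r_3)$ is an Artin $3$-presentation of $\pi_1$ of the surgered manifold, and that manifold is the double branched cover by Lemma~2.2, the corollary follows.

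The only point requiring a little care — and hence the main (minor) obstacle — is bookkeeping the sign conventions consistently: the Theorem of \cite{AS12} uses $(\sigma_2^2)^{f_1}$ whereas Lemma~2.2 produces $(\sigma_2)^{-2\gamma}$, forcing $f_1 = -\gamma$ rather than $f_1 = \gamma$, and likewise $e_1 = -\delta$, $e = -\eta$; one must also check that the corrected placement of the $\pm f_1$ exponents inside the conjugating words (the erratum noted just before the corollary) is the one actually used here. Once these three substitutions and the framing substitution are verified to be mutually consistent, no further argument is needed: the displayed relations are obtained by pure algebraic substitution, with no new topological input beyond the Theorem and Lemma~2.2 already in hand.
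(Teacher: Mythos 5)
Your proposal is correct and is exactly the argument the paper intends: the corollary is obtained by substituting $e=-\eta$, $e_1=-\delta$, $f_1=-\gamma$ and the framing $(m,n,p)=(-\alpha-\delta-\eta,\,-\beta-\delta-\gamma-\eta,\,-\epsilon-\gamma-\eta)$ from Lemma~2.2 into the corrected relations of the theorem from \cite{AS12}, and your exponent computations ($m-e-e_1=-\alpha$, etc.) all check out. The paper gives no written proof beyond ``it follows,'' so your spelled-out substitution, including the sign bookkeeping, is precisely the implicit argument made explicit.
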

  
The W-Artinian $3$-presentations for some examples of the trivial group see Tables \ref{tab:examples1}, \ref{tab:examples2}, \ref{tab:examples3}, \ref{tab:examples4}, \ref{tab:examples5}, and \ref{tab:examples6}.
\begin{table}[!ht]
     \begin{tabular}{|r|p{10cm}|}
 \hline
 \textrm{No.}& $r_1,\ r_2,\ r_3$\\

 \hline
  1 & $x_1^{-1},\ $ $x_2^{-1}x_3^{-1}x_2^{-1},\ $ $x_3^{-1}x_2^{-1}$\\

 \hline
  2 & $x_2^{-1}x_1^{-1},\ $ $x_2^{-2}x_1^{-1},\ $ $x_3^{-1}$\\

 \hline
  3 & $x_3^{-1}x_2^{-1}x_1^{-1},\ $ $x_3^{-1}x_2^{-1}x_3^{-1}x_2^{-1}x_1^{-1},\ $ $x_3^{-2}x_2^{-1}x_3^{-1}x_2^{-1}x_1^{-1}$\\

 \hline
  4 & $x_1^{-1}x_2^{-1}x_1^{-1}x_3^{-1}x_2^{-1}x_1^{-1},\ $ $x_2^{-1}x_1^{-1}x_3^{-1}x_2^{-1}x_1^{-1},\ $ $x_3^{-1}x_2^{-1}x_1^{-1}$\\

 \hline
  5 & $x_1^{-1},\ $ $x_3,\ $ $x_2x_3$\\

 \hline
  6 & $x_1^{-1},\ $ $x_2x_3^{-1}x_2^{-1},\ $ $x_3^{-1}x_2^{-1}$\\

 \hline
  7 & $x_3^{-1}x_2^{-1}x_1^{-1},\ $ $x_3^{-1}x_2^{-1}x_3^{-1}x_2^{-1}x_1^{-1},\ $ $x_2^{-1}x_3^{-1}x_2^{-1}x_1^{-1}$\\

 \hline
  8 & $x_2x_3^{-1}x_2^{-1}x_1^{-1},\ $ $x_1x_2x_3^{-1}x_2^{-1}x_1^{-1},\ $ $x_3^{-1}x_2^{-1}x_1^{-1}$\\

 \hline
  9 & $x_1x_2x_3,\ $ $x_3^{-1}x_2^{-1}x_1x_2x_3,\ $ $x_3^{-2}x_2^{-1}x_1x_2x_3$\\

 \hline
  10 & $x_1^{-1}x_3,\ $ $x_3,\ $ $x_1x_2x_3$\\

 \hline
  11 & $x_3^{-1}x_2^{-1}x_1^{-1},\ $ $x_1^{-1},\ $ $x_3^{-1}x_1^{-1}$\\

 \hline
  12 & $x_1x_2^{-1}x_1^{-1}x_3^{-1}x_2^{-1}x_1^{-1},\ $ $x_2^{-1}x_1^{-1}x_3^{-1}x_2^{-1}x_1^{-1},\ $ $x_3^{-1}x_2^{-1}x_1^{-1}$\\

 \hline
  13 & $x_1^{-1}x_2x_3x_2^{-1}x_3^{-1}x_2^{-1}x_1^{-1},\ $ $x_2^{-1}x_3^{-1}x_2^{-1}x_1^{-1},\ $ $x_3^{-1}x_2^{-1}$\\

 \hline
  14 & $x_1^{-1},\ $ $x_2^{-1},\ $ $x_3^{-1}$\\

 \hline
  15 & $x_2x_3x_2^{-1}x_3^{-1}x_2^{-1}x_1^{-1},\ $ $x_2^{-2}x_3^{-1}x_2^{-1}x_1^{-1},\ $ $x_3^{-1}x_2^{-1}$\\

 \hline
  16 & $x_1^{-1}x_3^{-1}x_2^{-1}x_1^{-1},\ $ $x_2^{-1}x_3^{-1}x_2^{-1}x_1^{-1},\ $ $x_3^{-1}x_2^{-1}x_1^{-1}$\\

 \hline
  17 & $x_3^{-1}x_2^{-1}x_1^{-1},\ $ $(x_2^{-1}x_3^{-1})(x_2^{-1}x_3^{-1})x_2^{-1}x_1^{-1},\ $ $x_3^{-1}x_2^{-1}x_3^{-1}x_2^{-1}x_1^{-1}$\\

 \hline
  18 & $x_2x_3x_2^{-1}x_3^{-1}x_2^{-1}x_1^{-1},\ $ $x_2^{-1}x_3^{-1}x_2^{-1}x_1^{-1},\ $ $x_3^{-2}x_2^{-1}$\\

 \hline
  19 & $x_3^{-1}x_2^{-1}x_1^{-1},\ $ $x_2^{-1}x_3^{-1}x_2^{-1}x_1^{-1},\ $ $x_3^{-2}x_2^{-1}x_1^{-1}$\\

 \hline
  20 & $x_2^{-1}x_1^{-1}x_3^{-1}x_2^{-1}x_1^{-1},\ $ $x_2^{-2}x_1^{-1}x_3^{-1}x_2^{-1}x_1^{-1},\ $ $x_3^{-1}x_2^{-1}x_1^{-1}$\\
\hline
\end{tabular}

    \caption{Examples of Artin n-presentations of the trivial group coming from
    the first part of Table \ref{tab:table1}}
    \label{tab:examples1}
\end{table}  

\begin{table}[!ht]
     \begin{tabular}{|r|p{14cm}|}
 \hline
 \textrm{No.}& $r_1,\ r_2,\ r_3$\\

 \hline
  1 & $x_1^2(x_2x_3)^2x_2(x_2x_3)^{-2},\ $ $(x_2x_3)^{-2}x_1(x_2x_3)^2x_2(x_2x_3)^{-2},\ $ $x_3^{-2}x_2^{-1}x_3^{-1}x_2^{-1}$\\

 \hline
  2 & $x_1^{-2}x_3^{-1}x_2^{-1}x_3x_1^{-1},\ $ $x_3x_1^{-1},\ $ $x_3x_2x_3$\\

 \hline
  3 & $x_1^2x_2x_3,\ $ $x_2^{-2}x_1x_2x_3,\ $ $x_3^{-1}x_1x_2x_3$\\

 \hline
  4 & $x_3^{-1}x_2^{-1}x_3x_2x_3,\ $ $x_2^{-2}x_3x_2x_3,\ $ $x_2x_3x_1x_2x_3$\\

 \hline
  5 & $x_1^{-2}x_3^{-1}x_2^{-1}x_1^{-1},\ $ $x_2x_3^{-1}x_2^{-1}x_1^{-1},\ $ $x_2^{-1}x_1^{-1}$\\

 \hline
  6 & $x_1(x_2x_3)^2x_2(x_2x_3)^{-3}x_1^{-1},\ $ $x_2(x_2x_3)^{-2}x_1(x_2x_3)^{2}x_2(x_2x_3)^{-3}x_1^{-1},\ $ $(x_2x_3)(x_2x_3)^{-2}x_1^{-1}$\\

 \hline
  7 & $x_1^2x_2x_3x_2x_3^{-1}x_2^{-1},\ $ $x_3^{-1}x_2^{-1}x_1x_2x_3x_2x_3^{-1}x_2^{-1},\ $ $x_3^{-3}x_2^{-1}$\\

 \hline
  8 & $x_1^{-1}x_3^{-1}x_2^{-1}x_3x_1^{-1}x_3^{-1}x_2^{-1}x_3x_1^{-1},\ $ $x_3x_1^{-1}x_3^{-1}x_2^{-1}x_3x_1^{-1},\ $ $x_3x_2x_3$\\

 \hline
  9 & $x_1^2x_2x_3,\ $ $x_2^{-1}x_1x_2x_3,\ $ $x_3^{-2}x_1x_2x_3$\\

 \hline
  10 & $x_3^{-1}x_2^{-1}x_3x_1^{-1}x_3^{-1}x_2^{-1}x_3x_2x_3,\ $ $x_2^{-1}x_3x_1^{-1}x_3^{-1}x_2^{-1}x_3x_2x_3,\ $ $x_2x_3x_1x_2x_3$\\

 \hline
  11 & $x_1^{-1}x_3^{-1}x_2^{-1}x_1^{-1}x_3^{-1}x_2^{-1}x_1^{-1},\ $ $x_2x_3^{-1}x_2^{-1}x_1^{-1}x_3^{-1}x_2^{-1}x_1^{-1},\ $ $x_2^{-1}x_1^{-1}x_3^{-1}x_2^{-1}x_1^{-1}$\\

 \hline
  12 & $x_1x_2x_3x_2(x_2x_3)^{-2}(x_1x_2x_3x_1)^{-1},$   $x_2(x_2x_3)^{-1}x_1x_2x_3x_2(x_2x_3)^{-2}(x_1x_2x_3x_2)^{-1},\ $ $(x_2x_3)^{-2}x_1^{-1}x_3^{-1}x_2^{-1}x_1^{-1}$\\

 \hline
  13 & $x_1x_2x_3x_2x_3^{-1}x_2^{-1}x_1x_2x_3,\ $ $x_2^{-2}x_3^{-1}x_2^{-1}x_1x_2x_3x_2x_3^{-1}x_2^{-1}x_1x_2x_3,\ $ $x_3^{-1}x_2^{-1}x_1x_2x_3$\\

 \hline
  14 & $x_2x_3,\ $ $x_2^{-2}x_1x_2x_3,\ $ $x_3x_1x_2x_3$\\

 \hline
  15 & $x_1(x_2x_3)^2x_2(x_2x_3)^{-2}x_1x_2x_3,\ $ $(x_3x_2)^{-2}x_2^{-1}x_1(x_2x_3)^2x_2(x_2x_3)^{-2}x_1x_2x_3,\ $ $(x_2x_3)^{-2}x_1x_2x_3$\\

 \hline
  16 & $x_1^{-1}x_2x_3,\ $ $x_2^{-1}x_1x_2x_3,\ $ $x_3x_1x_2x_3$\\

 \hline
  17 & $(x_2x_3)^2x_2(x_2x_3)^{-2},\ $ $(x_2x_3)^{-2}x_1(x_2x_3)^{2}x_2(x_2x_3)^{-2},\ $ $x_2^{-1}x_3^{-1}x_2^{-1}$\\

 \hline
  18 & $x_1^{-1}x_2x_3x_2x_3^{-1}x_2^{-1},\ $ $x_3^{-1}x_2^{-1}x_1x_2x_3x_2x_3^{-1}x_2^{-1},\ $ $x_2^{-1}$\\

 \hline
  19 & $(x_3^{-1}x_2^{-1}x_3x_1^{-1})(x_3^{-1}x_2^{-1}x_3x_1^{-1})x_3^{-1}x_2^{-1}x_1^{-1},\ $ $x_2(x_3x_1^{-1}x_3^{-1}x_2^{-1})(x_3x_1^{-1}x_3^{-1}x_2^{-1})x_1^{-1},\ $ $x_1^{-1}$\\

 \hline
  20 & $x_1x_3^{-1}x_2^{-1}x_1^{-1},\ $ $x_2x_3^{-1}x_2^{-1}x_1^{-1},\ $ $x_3^{-3}x_2^{-1}x_1^{-1}$\\

 \hline

\end{tabular}

    \caption{Examples of Artin n-presentations of the trivial group coming from
    the second part of Table \ref{tab:table1}}
    \label{tab:examples2}
\end{table}  

\begin{table}[!ht]
     \begin{tabular}{|r|p{14cm}|}
\hline
  1 & $(x_2x_3)^{\gamma}x_2(x_2x_3)^{-\gamma},\ $ $x_2(x_2x_3)^{-\gamma}x_1(x_2x_3)^{\gamma}x_2(x_2x_3)^{-\gamma},\ $ $x_3^{\gamma-1}(x_2x_3)^{-\gamma}$\\

 \hline
  2 & $x_1^{-\gamma}(x_1x_2x_3x_2x_3^{-1}x_2^{-1})^{\gamma-1},\ $ $x_2x_3^{-1}x_2^{-1}(x_1x_2x_3x_2x_3^{-1}x_2^{-1})^{\gamma-1},\ $ $x_2^{-1}$\\

 \hline
  3 & $x_2x_3,\ $ $x_2^{-\gamma+1}x_3x_1x_2x_3,\ $ $x_3^{\gamma-1}x_2x_3x_1x_2x_3$\\

 \hline
  4 & $x_1(x_1x_2x_3x_2x_3^{-1}x_2^{-1})^{\gamma-1}x_1x_2x_3,\ $ $x_2^{-\gamma}x_3^{-1}x_2^{-1}(x_1x_2x_3x_2x_3^{-1}x_2^{-1})^{\gamma-1}x_1x_2x_3,\ $ $x_3^{-1}x_2^{-1}x_1x_2x_3$\\

 \hline
  5 & $x_1^{-\gamma}(x_1x_2x_3)^{\gamma-1},\ $ $x_3(x_1x_2x_3)^{\gamma-1},\ $ $x_3x_2x_3(x_1x_2x_3)^{\gamma-1}$\\

 \hline
  6 & $x_1^2(x_2x_3)^{\gamma}x_2(x_2x_3)^{-\gamma}(x_1x_2x_3)^{\gamma-1},\ $ $x_2^{-1}(x_2x_3)^{-\gamma}x_1(x_2x_3)^{\gamma}x_2(x_2x_3)^{-\gamma}(x_1x_2x_3)^{\gamma-1},\ $ $(x_2x_3)^{-\gamma}(x_1x_2x_3)^{\gamma-1}$\\

 \hline
  7 & $x_1(x_2x_3)^{-\gamma+1}x_2^{-1}(x_2x_3)^{\gamma-1}x_1^{-1},\ $ $(x_2x_3)^{\gamma-1}x_1^{-1},\ $ $x_3^{-\gamma}(x_2x_3)^{\gamma-1}$\\

 \hline
  8 & $x_1^{\gamma-1}(x_1x_3^{-1}x_2x_3)^{-\gamma},\ $ $x_2^2x_3(x_1x_3^{-1}x_2x_3)^{-\gamma},\ $ $x_3^{-1}x_2x_3$\\

 \hline
  9 & $x_1x_3^{-1}x_2^{-1}x_1^{-1},\ $ $x_2^{\gamma-1}x_1^{-1},\ $ $x_3^{-\gamma}x_1^{-1}$\\

 \hline
  10 & $x_1(x_1x_3^{-1}x_2x_3)^{-\gamma}x_3^{-1}x_2^{-1}x_1^{-1},\ $ $x_2^{\gamma}x_3(x_1x_3^{-1}x_2x_3)^{-\gamma}x_3^{-1}x_2^{-1}x_1^{-1},\ $ $x_1^{-1}$\\

 \hline
  11 & $x_1^{\gamma-1}(x_1x_2x_3)^{-\gamma},\ $ $x_2^2x_3(x_1x_2x_3)^{-\gamma},\ $ $x_3^{-1}x_2x_3(x_1x_2x_3)^{-\gamma}$\\

 \hline
  12 & $x_1(x_2x_3)^{-\gamma+1}x_2^{-1}(x_2x_3)^{\gamma-1}x_1^{-1}(x_1x_2x_3)^{-\gamma},\ $ $(x_2x_3)^{\gamma-1}x_1^{-1}(x_1x_2x_3)^{-\gamma},\ $ $(x_2x_3)^{\gamma-1}(x_1x_2x_3)^{-\gamma}$\\

 \hline
  13 & $(x_2x_3)^{-\gamma+1}x_2^{-1}(x_2x_3)^{\gamma},\ $ $x_2^{-\gamma-1}(x_2x_3)^{\gamma},\ $ $x_3(x_2x_3)^{\gamma-1}x_1x_2x_3$\\

 \hline
  14 & $x_1^{\gamma}x_2x_1x_2x_3,\ $ $x_2^{-\gamma}(x_1x_2)(x_1x_2)x_3,\ $ $x_3^{-1}x_1x_2x_3$\\

 \hline
  15 & $x_1(x_2x_3)^{\gamma}x_2(x_2x_3)^{-\gamma}x_3^{-1}x_2^{-1}x_1^{-1},\ $ $x_2^{\gamma-1}(x_2x_3)^{-\gamma}x_1(x_2x_3)^{\gamma}x_2(x_2x_3)^{-\gamma}x_3^{-1}x_2^{-1}x_1^{-1},\ $ $x_3(x_2x_3)^{-\gamma}x_3^{-1}x_2^{-1}x_1^{-1}$\\

 \hline
  16 & $x_1^{-\gamma+1}x_2x_3^{-1}x_2^{-1}x_1^{-1},\ $ $x_2^{\gamma-1}x_1x_2x_3^{-1}x_2^{-1}x_1^{-1},\ $ $x_2^{-1}x_1^{-1}$\\

 \hline
  17 & $x_1^{\gamma}(x_2x_3)^{\gamma}x_2(x_2x_3)^{-\gamma}x_1x_2x_3,\ $ $(x_2x_3)^{-\gamma}x_1(x_2x_3)^{\gamma}x_2(x_2x_3)^{-\gamma}x_1x_2x_3,\ $ $x_3^{-1}(x_2x_3)^{-\gamma}x_1x_2x_3$\\

 \hline
  18 & $x_1^{-\gamma}(x_2x_3)^{-\gamma+1}x_2^{-1}(x_2x_3)^{\gamma},\ $ $x_2^{-1}(x_2x_3)^{\gamma},\ $ $x_3(x_2x_3)^{\gamma-1}x_1x_2x_3$\\

 \hline
  19 & $(x_1x_3^{-1}x_2x_3)^{-\gamma}(x_1x_2x_3)^{\gamma-1},\ $ $x_3(x_1x_3^{-1}x_2x_3)^{-\gamma}(x_1x_2x_3)^{\gamma-1},\ $ $x_3x_2x_3(x_1x_2x_3)^{\gamma-1}$\\

 \hline
  20 & $x_1^2x_2(x_1x_2x_3)^{\gamma-1},\ $ $x_2^{-1}x_1x_2(x_1x_2x_3)^{\gamma-1},\ $ $x_3^{-\gamma}(x_1x_2x_3)^{\gamma-1}$\\
\hline
\end{tabular}

    \caption{Examples of Artin n-presentations of the trivial group coming from
    the first part of Table \ref{tab:table2}}
    \label{tab:examples3}
\end{table}  

\begin{table}[!ht]
    \begin{tabular}{|r|p{15cm}|}
\hline
\textrm{No.}& $r_1,\ r_2,\ r_3$\\
\hline
  1 & $(x_2x_3)^{\gamma}x_2(x_2x_3)^{-\gamma}x_1(x_2x_3)^{\gamma}x_2(x_2x_3)^{-\gamma},\ $ $(x_2(x_2x_3)^{-\gamma}x_1(x_2x_3)^{\gamma})^2x_2(x_2x_3)^{-\gamma},\ $ $x_3(x_2x_3)^{-\gamma}$\\

 \hline
  2 & $x_1^{-\gamma+1}x_2x_3x_2x_3^{-1}x_2^{-1},\ $ $x_2x_3^{-1}x_2^{-1}x_1x_2x_3x_2x_3^{-1}x_2^{-1},\ $ $x_3x_2^{-1}$\\

 \hline
  3 & $x_2x_3x_1x_2x_3,\ $ $x_2^{-\gamma+1}(x_3x_1x_2)(x_3x_1x_2)x_3,\ $ $x_3(x_2x_3x_1)(x_2x_3x_1)x_2x_3$\\

 \hline
  4 & $x_1^2x_2x_3x_2x_3^{-1}x_2^{-1}(x_1x_2x_3)(x_1x_2x_3),\ $ $x_2^{-\gamma}x_3^{-1}x_2^{-1}x_1x_2x_3x_2x_3^{-1}x_2^{-1}(x_1x_2x_3)(x_1x_2x_3),\ $ $x_3^{-1}x_2^{-1}(x_1x_2x_3)(x_1x_2x_3)$\\

 \hline
  5 & $x_1^{-\gamma+1}x_2x_3,\ $ $x_3x_1x_2x_3,\ $ $x_3^2x_2x_3x_1x_2x_3$\\

 \hline
  6 & $x_1^2((x_2x_3)^{\gamma}x_2(x_2x_3)^{-\gamma}x_1)((x_2x_3)^{\gamma}x_2(x_2x_3)^{-\gamma}x_1)x_2x_3,\ $ $x_2^{-1}((x_2x_3)^{-\gamma}x_1(x_2x_3)^{\gamma}x_2)((x_2x_3)^{-\gamma}x_1(x_2x_3)^{\gamma}x_2)(x_2x_3)^{-\gamma}x_1x_2x_3,\ $ $(x_2x_3)^{-\gamma}x_1x_2x_3$\\

 \hline
  7 & $x_1^2x_3^{-1}x_2^{-1}x_3x_1^{-1},\ $ $x_2x_3x_1^{-1},\ $ $x_3^{-\gamma}x_2x_3$\\

 \hline
  8 & $x_1(x_1x_3^{-1}x_2^{-1}x_3^{-1}x_2x_3x_2x_3)^{-\gamma},\ $ $x_2^2x_3x_2x_3(x_1x_3^{-1}x_2^{-1}x_3^{-1}x_2x_3x_2x_3)^{-\gamma},\ $ $x_3^{-1}x_2x_3x_2x_3$\\

 \hline
  9 & $x_1^2(x_1x_2x_3)^{-1},\ $ $x_2x_1^{-1},\ $ $x_3^{-\gamma}x_1^{-1}$\\

 \hline
  10 & $x_1(x_1(x_3x_2x_3)^{-1}(x_2x_3)^2)^{-\gamma}(x_1x_2x_3)^{-1},\ $ $x_2^2x_3x_2x_3(x_1(x_3x_2x_3)^{-1}(x_2x_3)^2)^{-\gamma}(x_1x_2x_3)^{-1},\ $ $x_2x_3x_1^{-1}$\\

 \hline
  11 & $x_1(x_1x_2x_3)^{-\gamma},\ $ $x_2^3x_3(x_1x_2x_3)^{-\gamma},\ $ $x_3^{-1}x_2x_3(x_1x_2x_3)^{-\gamma}$\\

 \hline
  12 & $x_1x_3^{-1}x_2^{-1}x_3x_1^{-1}(x_1x_2x_3)^{-\gamma},\ $ $x_2^2x_3x_1^{-1}(x_1x_2x_3)^{-\gamma},\ $ $x_2x_3(x_1x_2x_3)^{-\gamma}$\\

 \hline
  13 & $x_3^{-1}x_2^{-1}x_3x_2x_3x_1x_2x_3,\ $ $x_2^{-\gamma}x_3x_2x_3x_1x_2x_3,\ $ $x_3(x_2x_3x_1)(x_2x_3x_1)x_2x_3$\\

 \hline
  14 & $x_1^2x_2(x_1x_2x_3)(x_1x_2x_3),\ $ $x_2^{-\gamma}(x_1x_2)(x_1x_2)x_3x_1x_2x_3,\ $ $x_3^{-1}x_1x_2x_3x_1x_2x_3$\\

 \hline
  15 & $(x_1(x_2x_3)^{\gamma}x_2(x_2x_3)^{-\gamma})(x_1(x_2x_3)^{\gamma}x_2(x_2x_3)^{-\gamma})(x_1x_2x_3)^{-1},\ $ $(x_2(x_2x_3)^{-\gamma}x_1(x_2x_3)^{\gamma})(x_2(x_2x_3)^{-\gamma}x_1(x_2x_3)^{\gamma})x_2(x_2x_3)^{-\gamma}(x_1x_2x_3)^{-1},\ $ $x_3(x_2x_3)^{-\gamma}(x_1x_2x_3)^{-1}$\\

 \hline
  16 & $x_1^{-\gamma+1}x_2(x_1x_2x_3)^{-1},\ $ $x_2x_1x_2(x_1x_2x_3)^{-1},\ $ $x_3x_2^{-1}x_1^{-1}$\\

 \hline
  17 & $x_1^2((x_2x_3)^{\gamma}x_2(x_2x_3)^{-\gamma}x_1)((x_2x_3)^{\gamma}x_2(x_2x_3)^{-\gamma}x_1)x_2x_3,\ $ $((x_2x_3)^{-\gamma}x_1(x_2x_3)^{\gamma}x_2)((x_2x_3)^{-\gamma}x_1(x_2x_3)^{\gamma}x_2)(x_2x_3)^{-\gamma}x_1x_2x_3,\ $ $x_3^{-1}(x_2x_3)^{-\gamma}x_1x_2x_3$\\

 \hline
  18 & $x_1^{-\gamma}x_3^{-1}x_2^{-1}x_3x_2x_3,\ $ $x_3x_2x_3,\ $ $x_3^2x_2x_3x_1x_2x_3$\\

 \hline
  19 & $(x_1x_3^{-1}x_2^{-1}x_3^{-1}x_2x_3x_2x_3)^{-\gamma}x_1x_2x_3,\ $ $x_3x_2x_3(x_1x_3^{-1}x_2^{-1}x_3^{-1}x_2x_3x_2x_3)^{-\gamma}x_1x_2x_3,\ $ $(x_3x_2)(x_3x_2)x_3x_1x_2x_3$\\

 \hline
  20 & $x_1^3x_2x_1x_2x_3,\ $ $x_2^{-1}(x_1x_2)(x_1x_2)x_3,\ $ $x_3^{-\gamma}x_1x_2x_3$\\

 \hline
\end{tabular}

    \caption{Examples of Artin n-presentations of the trivial group coming from
    the second part of Table \ref{tab:table2}}
    \label{tab:examples4}
\end{table}  

\begin{table}[!ht]
    \begin{tabular}{|r|p{14cm}|}
 \hline
 \textrm{No.}& $r_1,\ r_2,\ r_3$\\

 \hline
  1 & $x_3^{-1}x_2^{-1}x_3^{-1}x_2x_3x_1^{-1},\ $ $x_2^{-\beta+1}x_3x_2x_3x_1x_3^{-1}x_2^{-1}x_3^{-1}x_2x_3x_1^{-1},\ $ $x_3^{-2}x_2x_3x_1^{-1}$\\

 \hline
  2 & $x_1^2x_2x_3x_2^{-1}x_3^{-1}x_2^{-1}x_1^{-1}x_2x_3x_2^{-1}(x_3^{-1}x_2^{-1}x_1^{-1})(x_3^{-1}x_2^{-1}x_1^{-1}),\ $ $x_2^{-\beta-1}x_3^{-1}x_2^{-1}x_1^{-1}x_2x_3x_2^{-1}(x_3^{-1}x_2^{-1}x_1^{-1})(x_3^{-1}x_2^{-1}x_1^{-1}),\ $ $x_2^{-1}x_3^{-1}x_2^{-1}x_1^{-1}$\\

 \hline
  3 & $x_1^{-1}(x_2x_3)^{\beta}x_2^{-1}(x_2x_3)^{-\beta+1},\ $ $x_2(x_2x_3)^{-\beta+1},\ $ $x_3^{-2}(x_2x_3)^{-\beta}x_1x_2x_3$\\

 \hline
  4 & $x_1^{-\beta}x_2x_3x_2^{-1}x_3^{-1}x_2^{-1}x_1^{-1}x_2x_3x_2^{-1},\ $ $x_2x_3^{-1}x_2^{-1}x_1^{-1}x_2x_3x_2^{-1},\ $ $x_3^{-2}x_2^{-1}x_1x_2x_3$\\

 \hline
  5 & $x_1^2(x_2x_3)^{\beta}x_2^{-1}(x_2x_3)^{-\beta}(x_1^{-1}x_3^{-1}x_2^{-1})(x_1^{-1}x_3^{-1}x_2^{-1})x_1^{-1},\ $ $x_2^{-2}(x_2x_3)^{-\beta}(x_1^{-1}x_3^{-1}x_2^{-1})(x_1^{-1}x_3^{-1}x_2^{-1})x_1^{-1},\ $ $x_3(x_2x_3)^{-\beta}(x_3^{-1}x_2^{-1}x_1^{-1})(x_3^{-1}x_2^{-1}x_1^{-1})$\\

 \hline
  6 & $x_1^{-\beta+1}x_3^{-1}x_2^{-1}x_3^{-1}x_2x_3x_1^{-1}x_3^{-1}x_2^{-1}x_1^{-1},\ $ $x_3x_2x_3x_1x_3^{-1}x_2^{-1}x_3^{-1}x_2x_3x_1^{-1}x_3^{-1}x_2^{-1}x_1^{-1},\ $ $x_3^{-1}x_2x_3x_1^{-1}x_3^{-1}x_2^{-1}x_1^{-1}$\\

 \hline
  7 & $x_1(x_2x_3)^2(x_2^{-1}x_3^{-1})(x_2^{-1}x_3^{-1})x_2^{-1}x_1^{-1}x_3^{-1}x_2^{-1}x_1^{-1},\ $ $x_2^{-\beta-1}(x_3^{-1}x_2^{-1})(x_3^{-1}x_2^{-1})x_1^{-1}x_3^{-1}x_2^{-1}x_1^{-1},\ $ $x_3(x_2^{-1}x_3^{-1})(x_2^{-1}x_3^{-1})x_2^{-1}x_1^{-1}$\\

 \hline
  8 & $x_1^{-1}x_3^{-1}x_2x_3x_1x_3^{-1}x_1^{-1},\ $ $x_2^{-\beta+1}x_3x_1x_3^{-1}x_2x_3x_1x_3^{-1}x_1^{-1},\ $ $x_3^{-1}x_1^{-1}$\\

 \hline
  9 & $x_1(x_2x_3)^{\beta}x_2^{-1}(x_2x_3)^{-\beta}x_1^{-1}x_3^{-1}x_2^{-1}x_1^{-1},\ $ $x_2^{-3}(x_2x_3)^{-\beta}x_1^{-1}x_3^{-1}x_2^{-1}x_1^{-1},\ $ $x_3^2(x_2x_3)^{-\beta}x_3^{-1}x_2^{-1}x_1^{-1}$\\

 \hline
  10 & $x_1^{-\beta+1}x_3^{-1}x_2x_3x_1x_3^{-1}x_1^{-1},\ $ $x_2^{-1}x_3x_1x_3^{-1}x_2x_3x_1x_3^{-1}x_1^{-1},\ $ $x_3^{-1}x_1^{-1}$\\

 \hline
  11 & $x_1^{-2}(x_2x_3)^{\beta}x_2^{-1}(x_2x_3)^{-\beta+1}x_1x_2x_3,\ $ $(x_2x_3)^{-\beta+1}x_1x_2x_3,\ $ $x_3^{-1}(x_2x_3)^{-\beta}(x_1x_2x_3)(x_1x_2x_3)$\\

 \hline
  12 & $x_1^{-\beta}(x_2x_3)^2x_2^{-1}x_3^{-1}x_2^{-1}x_1x_2x_3,\ $ $x_3^{-1}x_2^{-1}x_1x_2x_3,\ $ $x_3^{-2}x_2^{-1}x_3^{-1}x_2^{-1}(x_1x_2x_3)(x_1x_2x_3)$\\

 \hline
  13 & $x_1^{-1}(x_2x_3)^2x_2^{-1}x_3^{-1}x_2^{-1},\ $ $x_2x_3^{-1}x_2^{-1},\ $ $x_3^{-\beta-1}x_2^{-1}x_3^{-1}x_2^{-1}x_1x_2x_3$\\

 \hline
  14 & $x_1^{-2}(x_1x_2x_3x_2x_3^{-1}x_2^{-1})^{-\beta}x_1x_2x_3,\ $ $x_2^2x_3^{-1}x_2^{-1}(x_1x_2x_3x_2x_3^{-1}x_2^{-1})^{-\beta}x_1x_2x_3,\ $ $x_3^{-2}x_2^{-1}x_1x_2x_3$\\

 \hline
  15 & $x_3^{-1}x_2^{-1}x_3^{-1}x_2x_3x_1^{-1},\ $ $x_2^{-1}x_3x_2x_3x_1x_3^{-1}x_2^{-1}x_3^{-1}x_2x_3x_1^{-1},\ $ $x_3^{-\beta}x_2x_3x_1^{-1}$\\

 \hline
  16 & $x_1^2(x_1x_2x_3x_2x_3^{-1}x_2^{-1})^{-\beta}x_3^{-1}x_2^{-1}x_1^{-1},\ $ $x_2^{-2}x_3^{-1}x_2^{-1}(x_1x_2x_3x_2x_3^{-1}x_2^{-1})^{-\beta}x_3^{-1}x_2^{-1}x_1^{-1},\ $ $x_2^{-1}x_3^{-1}x_2^{-1}x_1^{-1}$\\

 \hline
  17 & $x_1^{-1}x_3^{-1}x_2^{-1}x_3^{-1}(x_2x_3)^2(x_1x_2x_3)^{-\beta},\ $ $x_3x_2x_3x_1x_3^{-1}x_2^{-1}x_3^{-1}(x_2x_3)^2(x_1x_2x_3)^{-\beta},\ $ $x_3^{-1}(x_2x_3)^2(x_1x_2x_3)^{-\beta}$\\

 \hline
  18 & $x_1^2(x_2x_3)^2(x_2^{-1}x_3^{-1})(x_2^{-1}x_3^{-1})x_2^{-1}x_1^{-1}(x_1x_2x_3)^{-\beta},\ $ $x_2^{-2}(x_3^{-1}x_2^{-1})(x_3^{-1}x_2^{-1})x_1^{-1}(x_1x_2x_3)^{-\beta},\ $ $x_2^{-1}x_3^{-1}x_2^{-1}(x_1x_2x_3)^{-\beta}$\\

 \hline
  19 & $x_3^{-1}x_2x_3x_1x_3^{-1}x_1^{-1}x_3^{-1}x_2^{-1}x_1^{-1},\ $ $x_3x_1x_3^{-1}x_2x_3x_1x_3^{-1}x_1^{-1}x_3^{-1}x_2^{-1}x_1^{-1},\ $ $x_3^{-\beta}x_1^{-1}x_3^{-1}x_2^{-1}x_1^{-1}$\\

 \hline
  20 & $x_1(x_1x_2x_3x_2x_3^{-1}x_2^{-1})^{-\beta}(x_3^{-1}x_2^{-1}x_1^{-1})(x_3^{-1}x_2^{-1}x_1^{-1}),\ $ $x_2^{-1}x_3^{-1}x_2^{-1}(x_1x_2x_3x_2x_3^{-1}x_2^{-1})^{-\beta}(x_3^{-1}x_2^{-1}x_1^{-1})(x_3^{-1}x_2^{-1}x_1^{-1}),\ $ $x_3x_2^{-1}(x_3^{-1}x_2^{-1}x_1^{-1})(x_3^{-1}x_2^{-1}x_1^{-1})$\\

 \hline
\end{tabular}
    \caption{Examples of Artin n-presentations of the trivial group coming from
    the first part of Table \ref{tab:table3}}
    \label{tab:examples5}
\end{table}  

\begin{table}[!ht]
    \begin{tabular}{|r|p{14cm}|}
 \hline
 \textrm{No.}& $r_1,\ r_2,\ r_3$\\

 \hline
  1 & $x_1^2x_3^{-1}x_2^{-1}x_3^{-1}x_2x_3x_1^{-1},\ $ $x_2^{-1}x_3x_2x_3x_1x_3^{-1}x_2^{-1}x_3^{-1}x_2x_3x_1^{-1},\ $ $x_3^{-\epsilon}x_2x_3x_1^{-1}$\\

 \hline
  2 & $x_1^2(x_1x_3^{-1}x_2x_3)^{-\epsilon}x_3^{-1}x_2^{-1}x_1^{-1},\ $ $x_2^{-1}x_3(x_1x_3^{-1}x_2x_3)^{-\epsilon}x_3^{-1}x_2^{-1}x_1^{-1},\ $ $x_3x_1^{-1}$\\

 \hline
  3 & $x_1(x_2x_3)^2x_2^{-1}x_3^{-1}x_2^{-1},\ $ $x_2x_3^{-1}x_2^{-1},\ $ $x_3^{-\epsilon-1}x_2^{-1}x_3^{-1}x_2^{-1}x_1x_2x_3$\\

 \hline
  4 & $x_1^{-2}(x_1x_3^{-1}x_2x_3)^{-\epsilon}x_1x_2x_3,\ $ $x_2^3x_3(x_1x_3^{-1}x_2x_3)^{-\epsilon}x_1x_2x_3,\ $ $x_3^{-1}x_2x_3x_1x_2x_3$\\

 \hline
  5 & $x_1^2(x_2x_3)^2(x_2^{-1}x_3^{-1})(x_2^{-1}x_3^{-1})x_2^{-1}x_1^{-1}(x_1x_2x_3)^{-\epsilon},\ $ $(x_3^{-1}x_2^{-1})(x_3^{-1}x_2^{-1})x_1^{-1}(x_1x_2x_3)^{-\epsilon},\ $ $x_2^{-1}x_3^{-1}x_2^{-1}(x_1x_2x_3)^{-\epsilon}$\\

 \hline
  6 & $x_1^{-1}x_3^{-1}x_2^{-1}x_3^{-1}(x_2x_3)^2(x_1x_2x_3)^{-\epsilon},\ $ $x_2^2x_3x_2x_3x_1x_3^{-1}x_2^{-1}x_3^{-1}(x_2x_3)^2(x_1x_2x_3)^{-\epsilon},\ $ $x_3^{-1}(x_2x_3)^2(x_1x_2x_3)^{-\epsilon}$\\

 \hline
  7 & $x_1^2(x_2x_3)^{\epsilon}x_2(x_2x_3)^{-\epsilon}x_3^{-1}x_2^{-1}x_1^{-1},\ $ $x_2^{-2}(x_2x_3)^{-\epsilon}x_1(x_2x_3)^{\epsilon}x_2(x_2x_3)^{-\epsilon}x_3^{-1}x_2^{-1}x_1^{-1},\ $ $x_3^2(x_2x_3)^{-\epsilon}x_3^{-1}x_2^{-1}x_1^{-1}$\\

 \hline
  8 & $x_1^{-\epsilon+1}x_3^{-1}x_2x_3x_1x_3^{-1}x_1^{-1},\ $ $x_2^{-1}x_3x_1x_3^{-1}x_2x_3x_1x_3^{-1}x_1^{-1},\ $ $x_3x_1^{-1}$\\

 \hline
  9 & $x_1(x_2x_3)^2x_2^{-1}x_3^{-1}x_2^{-1},\ $ $x_2^{-\epsilon-1}x_3^{-1}x_2^{-1},\ $ $x_3x_2^{-1}x_3^{-1}x_2^{-1}x_1x_2x_3$\\

 \hline
  10 & $x_1^{-1}(x_3^{-1}x_2x_3x_1)(x_3^{-1}x_2x_3x_1)x_2x_3,\ $ $x_2^{-\epsilon+1}(x_3x_1x_3^{-1}x_2)(x_3x_1x_3^{-1}x_2)x_3x_1x_2x_3,\ $ $x_3^{-1}x_2x_3x_1x_2x_3$\\

 \hline
  11 & $x_1^{-\epsilon}(x_2x_3)^2x_2^{-1}x_3^{-1}x_2^{-1}x_1x_2x_3,\ $ $x_3^{-1}x_2^{-1}x_1x_2x_3,\ $ $x_2^{-1}x_3^{-1}x_2^{-1}(x_1x_2x_3)^2$\\

 \hline
  12 & $x_1^{-1}(x_2x_3)^{\epsilon}x_2(x_2x_3)^{-\epsilon}(x_1x_2x_3)^2,\ $ $x_2(x_2x_3)^{-\epsilon}x_1(x_2x_3)^{\epsilon}x_2(x_2x_3)^{-\epsilon}(x_1x_2x_3)^2,\ $ $x_3^{-1}(x_2x_3)^{-\epsilon}(x_1x_2x_3)^2$\\

 \hline
  13 & $(x_2x_3)^{\epsilon}x_2(x_2x_3)^{-\epsilon}x_1x_2x_3,\ $ $x_2^2(x_2x_3)^{-\epsilon}x_1(x_2x_3)^{\epsilon}x_2(x_2x_3)^{-\epsilon}x_1x_2x_3,\ $ $x_3^{-2}(x_2x_3)^{-\epsilon}x_1x_2x_3$\\

 \hline
  14 & $x_1^{-\epsilon}x_2x_3x_2^{-1}x_3^{-1}x_2^{-1}x_1^{-1}x_2x_3x_2^{-1},\ $ $x_2x_3^{-1}x_2^{-1}x_1^{-1}x_2x_3x_2^{-1},\ $ $x_2^{-1}x_1x_2x_3$\\

 \hline
  15 & $x_3^{-1}x_2^{-1}x_3^{-1}(x_2x_3)^2x_1x_2x_3,\ $ $x_2^{-\epsilon+1}x_3x_2x_3x_1x_3^{-1}x_2^{-1}x_3^{-1}(x_2x_3)^2x_1x_2x_3,\ $ $x_3^{-2}(x_2x_3)^2x_1x_2x_3$\\

 \hline
  16 & $x_1^2x_2x_3x_2^{-1}x_3^{-1}x_2^{-1}x_1^{-1}x_2x_3x_2^{-1},\ $ $x_2^{-\epsilon-1}x_3^{-1}x_2^{-1}x_1^{-1}x_2x_3x_2^{-1},\ $ $x_2^{-1}x_1x_2x_3$\\

 \hline
  17 & $x_1^{-\epsilon+1}x_3^{-1}x_2^{-1}x_3^{-1}x_2x_3x_1^{-1}x_3^{-1}x_2^{-1}x_1^{-1},\ $ $x_3x_2x_3x_1x_3^{-1}x_2^{-1}x_3^{-1}x_2x_3x_1^{-1}x_3^{-1}x_2^{-1}x_1^{-1},\ $ $x_3x_2x_3x_1^{-1}x_3^{-1}x_2^{-1}x_1^{-1}$\\

 \hline
  18 & $x_1^3(x_2x_3)^{\epsilon}x_2(x_2x_3)^{-\epsilon}(x_3^{-1}x_2^{-1}x_1^{-1})(x_3^{-1}x_2^{-1}x_1^{-1}),\ $ $x_2^{-1}(x_2x_3)^{-\epsilon}x_1(x_2x_3)^{\epsilon}x_2(x_2x_3)^{-\epsilon}(x_3^{-1}x_2^{-1}x_1^{-1})(x_3^{-1}x_2^{-1}x_1^{-1}),\ $ $x_3(x_2x_3)^{-\epsilon}(x_3^{-1}x_2^{-1}x_1^{-1})(x_3^{-1}x_2^{-1}x_1^{-1})$\\

 \hline
  19 & $x_3^{-1}x_2x_3x_1x_3^{-1}x_2x_3(x_1x_2x_3)^{-\epsilon},\ $ $x_2^2(x_3x_1x_3^{-1}x_2)(x_3x_1x_3^{-1}x_2)x_3(x_1x_2x_3)^{-\epsilon},\ $ $x_3^{-2}x_2x_3(x_1x_2x_3)^{-\epsilon}$\\

 \hline
  20 & $x_1(x_2x_3x_2^{-1}x_3^{-1}x_2^{-1}x_1^{-1})(x_2x_3x_2^{-1}x_3^{-1}x_2^{-1}x_1^{-1})(x_1x_2x_3)^{-\epsilon},\ $ $x_3^{-1}x_2^{-1}x_1^{-1}x_2x_3x_2^{-1}x_3^{-1}x_2^{-1}x_1^{-1}(x_1x_2x_3)^{-\epsilon},\ $ $x_3x_2^{-1}(x_1x_2x_3)^{-\epsilon}$\\

 \hline
 \end{tabular}
    \caption{Examples of Artin n-presentations of the trivial group coming from
    the second part of Table \ref{tab:table3}}
    \label{tab:examples6}
\end{table}  

\section{Hyperbolic closed pure 3-braids} \label{hiperbolicas}

In this Section, we prove which closed pure $3$-braids are hyperbolic. To do this, we use a result of Birman and Menasco.

In \cite{AS04}, we proved that the group of  3-pure braids is the direct product of the groups $ Z$ and the free group in two generators. This does permit us to give a general diagram to represent a 3-pure braid, which is given by 
$$\beta =\prod _{i=1}^n \sigma_1^{2e_i}\sigma_2^{2f_i}(\sigma_1\sigma_2\sigma_1)^{2e}$$
where $e_i,f_i$ and $e$ are integers. We call to $\beta = \sigma _1 ^{2e_1} \sigma _2^{2f_1}(\sigma_1\sigma_2\sigma_1)^{2e}$ the small case.

Also, in \cite {AE09} some hyperbolic closed pure 3-braids are described, and after performing integral Dehn surgery $S^3$ is obtained. So, these are exceptional Dehn fillings coming from the hexatangle.
Here, we said which  closed pure 3-braids are hyperbolic, not only in the small case, but in the general case. The result of Birman and Menasco  is the following corollary. Here is a small introduction taken from \cite {BM94} to understand it.
 Let  $L$ be a closed 3-braid with axis A. A is unknotted, so $S^3 - A$ is fibred by open disks $H = \{H_{\theta} : \theta \in [0,2\pi] \}$. Suppose $T$ is not a peripheral torus. The torus $T$ of type 0 is described as follows (see Figure \ref{fig:birman1}).  The torus $T$ is the boundary of a (possible knotted) solid torus $V$ in $S^3$ whose core is a closed braid with axis $A$. The link $L$ is also a closed braid with  respect to $A$, part of $L$ inside $V$ and part (possibly empty) outside. The torus $T$ is transverse to all fiber $H_\theta$  in the fibration of $S^3 - A$, and intersects each fiber in a meridian disk of $V$.

\begin{center}
\begin{figure}[ht]
\includegraphics[width=6cm]{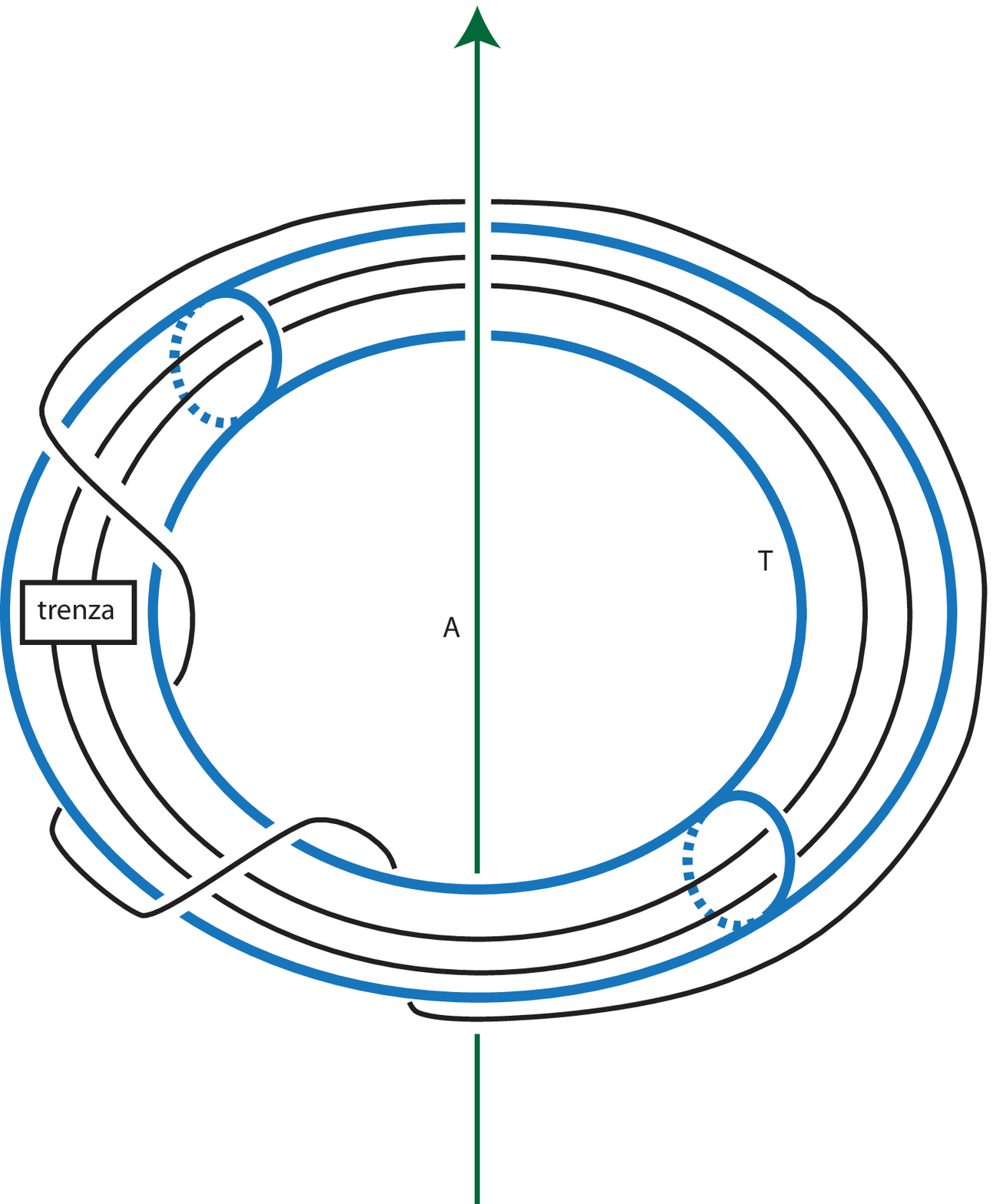} 
\caption{Torus of type 0. This figure was taken from \cite{BM94}}
\label{fig:birman1}
\end{figure}
\end{center}

 \begin{cor}
  Let ${\bf L}$ be a closed 3-braid or a prime link of braid index  3, and let ${\bf T}$  be an essential torus in  $S^3 -{\bf L}$. Then ${\bf T }$ has an embedding of type $0$ and ${\bf L}$ is conjugated to  $$\beta = (\sigma _2)^k(\sigma _1\sigma_2^2\sigma_1)^q$$

 \noindent  where $ |k|\geq2, |q|\geq1.$
\end{cor}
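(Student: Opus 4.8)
The strategy is to recognize this statement as a special case of the Birman--Menasco classification of toroidal closed $3$-braids and then to extract the explicit braid word; the bulk of the work lies in that classification, which I would cite from \cite{BM94} rather than reprove. To begin I would set up the braid-foliation picture: write $A$ for the braid axis, so that $S^3\setminus A$ is fibered by open disks $H=\{H_\theta:\theta\in[0,2\pi]\}$, and put the essential torus $\mathbf{T}\subset S^3\setminus\mathbf{L}$ into general position with respect to $H$. Then $H$ cuts a singular foliation on $\mathbf{T}$ whose leaves are the curves $\mathbf{T}\cap H_\theta$ and whose singularities are the finitely many tangencies of $\mathbf{T}$ with the fibers. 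The Birman--Menasco analysis of such foliations --- destabilizations, exchange moves, and the resulting monotone decrease of a combinatorial complexity --- uses that $\mathbf{L}$ meets each $H_\theta$ in only three points to exclude all the more complicated embedding types, and shows that $\mathbf{T}$ can be isotoped (rel $\mathbf{L}$) to an embedding of type $0$: transverse to every $H_\theta$ and meeting it in one meridian disk $D_\theta$ of the solid torus $V=\bigcup_\theta D_\theta$, whose core is a closed braid with axis $A$. I expect this reduction to be the main obstacle; it is exactly the technical core of \cite{BM94}, and a self-contained argument would have to rebuild their braid-foliation machinery.

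Granting the type-$0$ embedding, the rest is bookkeeping with $V$ and the fibration. Since $V\cap H_\theta=D_\theta$ is a single meridian disk, the core of $V$ is a closed $1$-braid with respect to $A$, so $V$ is an unknotted solid torus. The three points of $\mathbf{L}\cap H_\theta$ are partitioned by $\partial D_\theta$ into those inside $V$ and those in $V'=\overline{S^3\setminus V}$; if all three, or only one, lay on the same side, then $D_\theta$ (or an evident compressing disk in the other region) would exhibit $\mathbf{T}$ as compressible or boundary-parallel, contrary to essentiality. Hence exactly two strands of $\mathbf{L}$ run inside $V$, forming a $2$-strand cable pattern, and one runs outside, $V$ being linked with that outside strand, say $q$ times. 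Now I would read off the conjugacy class: after conjugating so that the two inside strands are strands $2,3$, their winding about the core of $V$ contributes a factor $(\sigma_2)^k$; and one full period of the way the cabled torus $V$ winds around $A$ relative to the single outside strand is the pure braid $A_{12}A_{13}=\sigma_1\sigma_2^2\sigma_1$ (equivalently the full twist $\Delta^2$ composed with $\sigma_2^{-2}$), so $q$ periods contribute $(\sigma_1\sigma_2^2\sigma_1)^q$. As these factors are supported in disjoint regions, a further conjugation separates them, giving that $\mathbf{L}$ is conjugate in $B_3$ to $(\sigma_2)^k(\sigma_1\sigma_2^2\sigma_1)^q$.

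It remains to justify the inequalities, which I would do by ruling out the degenerate values. If $|k|\le 1$, the cable pattern inside $V$ is isotopic onto the core of $V$ or into a ball, so $\mathbf{T}$ is boundary-parallel or compressible, hence not essential. If $q=0$, then $\mathbf{L}=(\sigma_2)^k$ is a split link, and $S^3\setminus\mathbf{L}$ contains no essential torus, so again $\mathbf{T}$ cannot be essential. Excluding these possibilities forces $|k|\ge 2$ and $|q|\ge 1$, which is the asserted normal form.
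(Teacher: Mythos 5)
This statement is not proved in the paper at all: it is quoted verbatim as Corollary~1 of Birman--Menasco \cite{BM94} (the surrounding text says ``The result of Birman and Menasco is the following corollary''), so there is no internal argument to compare yours against. Your reconstruction is consistent with how the result is actually obtained: you correctly identify that the entire weight of the statement sits in the braid-foliation classification of essential tori in closed $3$-braid complements (elimination of the higher embedding types via exchange moves and complexity reduction), defer that to \cite{BM94}, and then do the bookkeeping --- exactly two strands inside the type-$0$ solid torus $V$, the inside pair contributing $(\sigma_2)^k$, the outside strand contributing $(\sigma_1\sigma_2^2\sigma_1)^q=(A_{12}A_{13})^q$, and the two factors commuting since $\sigma_1\sigma_2^2\sigma_1=\Delta^2\sigma_2^{-2}$. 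That bookkeeping is correct, and the identification $A_{12}A_{13}=\sigma_1\sigma_2^2\sigma_1$ checks out.

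Two small cautions. First, your argument that $V$ is unknotted and meets each fiber in a single meridian disk leans on the simplified description of type $0$ reproduced in this paper; in the general Birman--Menasco setup a type-$0$ torus may meet a fiber in several meridian disks and bound a knotted $V$, and ruling that out for $3$-braids is part of what their corollary (as opposed to their theorem) asserts, so it belongs with the cited machinery rather than with your ``granting type $0$'' bookkeeping. Second, your exclusion of $k=0$ is justified with the wrong mechanism: two parallel copies of the core of $V$ are not isotopic into a ball, so $\mathbf{T}$ is neither compressible from inside nor parallel to an inside component; rather, for $k=0$ the complement of $(\sigma_1\sigma_2^2\sigma_1)^q$ fibers as (pair of pants)$\times S^1$ and $\partial V$ is boundary-parallel to the torus around the \emph{outside} component. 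The conclusion ($|k|\ge 2$) is right, but the stated reason would not survive scrutiny. Neither point is a fatal gap in a sketch that explicitly outsources the core to \cite{BM94}.
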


In this work, we determine which  closed pure 3-braids are equivalent to $ (\sigma _2)^k(\sigma _1\sigma_2^2\sigma_1)^q$. Observe that a condition for  $\beta$ to be pure is that $k$ is an even number.
 
We observe that although the result of Birman and Menasco is incomplete in the case of closed $n$-braids, in the case of closed $3$-braids is complete, we use it to determine which closed pure $3$-braids are hyperbolic.

In \cite{BM94} Corollary 1, it is shown  that if a closed 3-braid $\hat \beta$ admits an essential
torus, then  $ \beta $  is conjugate to $$(\sigma _2)^k( \sigma _1\sigma _2^2 \sigma _1)^q$$ where
$|k|\geq 2, |q|\geq 1$.
Here, we determine which small closed pure 3-braids are equivalent to $\hat \beta$.
In our case, $k$ is an even number.

Let $\rho : B_3 \rightarrow {\mathbb{Z}}_2 * {\mathbb{Z}}_3$ be such that
$$B_3 = < \sigma_1, \sigma_2 | \sigma_1\sigma_2\sigma _1 =\sigma_2\sigma_1\sigma_2>$$
and $${\mathbb{Z}}_2 * {\mathbb{Z}}_3 = <\Delta, y | \Delta^2 = 1, y^3 = 1>$$
where $$ \sigma_1  \mapsto y^2\Delta , \sigma_2  \mapsto \Delta y^2$$

Then it is known that $\rho$ is an epimorphism, it comes from the fact that $B_3$ is homeomorphic to the group of the trefoil knot, and the latter has a quotient to ${\mathbb{Z}}_2 * {\mathbb{Z}}_3$. Observe that we are abusing the notation, $\Delta$ represents here a generator of ${\mathbb{Z}}_2 * {\mathbb{Z}}_3$, but also represents the $\Delta = \sigma_1\sigma_2 \sigma_1 = \sigma_2\sigma_1 \sigma_2$ in the 3-braid group $B_3$. This abuse is justified by the relation $\rho(\Delta) = \Delta$. Notice also that $\rho(\sigma_1 \sigma_2) = y$.


We will use $\rho$ to determine when the words $$w_1 = \sigma_1^{2e_1}\sigma_2
^{2f_1}\Delta ^{2e}, \quad w_2 = \sigma_2^{2q}(\sigma_1\sigma_2^2\sigma_1)^{2k}$$
are conjugated.  

It follows that $\rho (w_1) = (y^2\Delta)^{2e_1}(\Delta y^2)^{2f_1}$
and since $(y\Delta)^{-1} = \Delta y^2$ we obtain that: $$\rho(w_1) = (y^2\Delta)^{2(e_1-f_1)}$$

Now,  $\rho (w_2) = 
(\Delta y^2)^{2q}(y^2 \Delta\Delta y^2\Delta y^2y^2\Delta)^{2k} = (\Delta y^2)^{2q}(y\Delta y\Delta)^{2k}$

$$= \Delta y^2 \Delta y^2\cdots \Delta y^2 y\Delta y\Delta \cdots y\Delta$$
$$= (\Delta y^2 )^{2q-4k} = (y\Delta)^{4k-2q}.$$

Now, we proceed by cases, using that $\rho (w_1)$ is conjugated to $\rho(w_2)$ if and only if their minimal cyclically reduced words are equals (see \cite[Theorem 1.4]{MKS}).

Case 1. $e_1 \geq 1, f_1\geq 1$:

$$(y^2 \Delta)^{2e_1} (\Delta y^2)^{2f_1} =$$
$$ = y^2 \Delta (y^2\Delta (y^2 \Delta)^{2e_1-2} y^2 \Delta \Delta (y\Delta^2 y^2)^{2f_1 - 2}\Delta y^2 =$$
$$= y^2 \Delta (y^2 \Delta )^{2e_1 - 2}y(\Delta y^2)^{2f_1 - 2} \Delta y^2 =$$
$$= y \Delta (y^2\Delta)^{2e_1 -2} y (\Delta y^2)^{2f_1 - 2} \Delta $$

but $\rho (w_2) = (y\Delta)^{2(k-q)}$, so in this case $e_1 = 1$ and $f_1 = 1 $ and $k-q =1$.

Case 2. $e_1 < 1, f_1 \geq 0$:

$$( y^2 \Delta )^{2e_1}( \Delta y^2)^{2f_1} = (\Delta y)^{-2e_1}(\Delta y^2)^{2f_1}$$
and since the minimal cyclically reduced word of $\rho (w_2)$ only has $y^2$ or only $y$, it follows that
$e_1 = 0$ or $f_1 = 0$.
If $e_1 = 0$, we have  $(\Delta y^2)^{2f_1}$ which is a power of $(\Delta y^2)$, and if $f_1 =0$,
$(\Delta y)^{-2e_1}$ is a power of 
$\Delta y^2$ if and only if is a negative power and $(\Delta y)^{-2e_1}$ is equivalent to 
$y\Delta y\Delta\cdots y\Delta$, then $e_1 = 0$ or $f_1 = 0$.

Case 3. $e_1\geq 0, f_1< 1$:

Remind that $y\Delta y^2 $  is equivalent to $\Delta $ so,
$$(y^2 \Delta)^{2e_1}(\Delta y^2)^{2f_1} = (y^2 \Delta)^{2e_1}(y\Delta)^{-2f_1}$$
is equivalent to a power of $\Delta y^2$ if $e_1 = 0 $ or $f_1 = 0$.

Case 4. $e_1 \leq -1, f_1\leq -1$:

In  this case,
$$(y^2 \Delta )^{2e_1} (\Delta y^2)^{2f_1} = (\Delta  y)^{-2e_1}(y\Delta )^{-2f_1}$$
$$= \Delta y (\Delta y)^{-2e_1 - 2}
\Delta y y \Delta (y \Delta )^{-2f_1 - 2} y \Delta =$$
$$ = y^2 (\Delta y )^{-2e_1 - 2} \Delta y^2 \Delta (y \Delta )^{-2f_1 - 2 }$$
Then $e_1 = -1$ and $f_1 = -1$.

So, we have 
\begin{thm} Let $\beta = \sigma_1^{2e_1}\sigma_2^{2f_1}\Delta ^{2e}$ be a pure $3$-braid.Then the closed pure $3$-braid $\hat\beta $ has an essential  torus if: 

i)$e_1 = 0$, or $f_1 = 0$ and $e$ any integer. Observe that if $e = 0$ then $\hat \beta$ is splittable.   

ii) $e_1 = f_1 = \pm 1$ \hskip 5pt and $e$ any integer.

iii) $e_1$, $ f_1 \in \mathbb{Z} - \{0\}$  
 and $ e = 0$. Observe that in this case we have a connected sum.
\end{thm}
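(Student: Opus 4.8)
The plan is to prove \emph{sufficiency} by producing, in each of the three cases, a concrete surface that is manifestly essential: for (i) and (ii) I will conjugate $\beta$ into the Birman--Menasco satellite form $w_2=\sigma_2^{k}(\sigma_1\sigma_2^2\sigma_1)^{q}$ and read off the torus, while for (iii) I will exhibit a connected--sum decomposition. What I use here is the \emph{converse} of the Birman--Menasco Corollary: if $\beta$ is conjugate to $w_2$ with $|k|\ge 2$ and $|q|\ge 1$, then the solid torus $V$ of the statement (Figure \ref{fig:birman1}) is a genuine nontrivial companion, so $\partial V$ is an essential torus of type $0$. This converse is geometrically evident from the type-$0$ picture, even though the Corollary itself is stated only in the forward direction. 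Thus it suffices, in each case, to produce the appropriate conjugacy or decomposition.

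The engine for (i) and (ii) is the identity $\sigma_1\sigma_2^2\sigma_1=\Delta^2\sigma_2^{-2}$, which follows from the braid relation together with the centrality of $\Delta^2$. It yields $w_2=\Delta^{2q}\sigma_2^{\,k-2q}\sim \sigma_1^{\,k-2q}\Delta^{2q}$ after conjugating by $\Delta$, so that the satellite braids are, up to conjugacy, exactly those of the form $\sigma_1^{2a}\Delta^{2b}$. For case (i) I may assume $f_1=0$, since the substitution $\sigma_1\leftrightarrow\sigma_2$ is realized by conjugation by $\Delta$ and interchanges $e_1$ and $f_1$; then $\beta=\sigma_1^{2e_1}\Delta^{2e}$ matches $w_2$ with $q=e$ and $k=2(e_1+e)$, the latter even so that $w_2$ is pure. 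For case (ii), conjugating by $\sigma_1^{\mp 1}$ gives $\sigma_1^{\pm2}\sigma_2^{\pm2}\sim\sigma_1^{\pm1}\sigma_2^{\pm2}\sigma_1^{\pm1}$, and applying the identity reduces $\beta$ to the $f_1=0$ braid $\sigma_1^{\mp2}\Delta^{2(e\pm1)}$, which is then handled exactly as in (i).

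For case (iii), where $e=0$ and $e_1,f_1\neq 0$, the closure $\hat\beta=\widehat{\sigma_1^{2e_1}\sigma_2^{2f_1}}$ decomposes along the middle strand: a $2$-sphere meeting that component in two points separates the $\sigma_1$-block from the $\sigma_2$-block and splits $\hat\beta$ into the nontrivial torus links $\widehat{\sigma_1^{2e_1}}$ and $\widehat{\sigma_2^{2f_1}}$. This realizes $\hat\beta$ as a connected sum along the shared component, and since both summands are nontrivial (because $e_1,f_1\neq 0$) the swallow--follow torus is essential, giving the desired torus.

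The main obstacle is bookkeeping at the boundary of the parameter ranges: the conjugacies of (i) and (ii) produce a genuine companion only when the non-degeneracy $|k|\ge 2$ and $|q|\ge 1$ holds, i.e.\ when $e\neq 0$ and $e_1+e\neq 0$. I expect the finitely many excluded sub-cases to fall back exactly onto the other hypotheses and remarks of the theorem: $e=0$ in (i) yields the split link of the parenthetical remark (strand $3$ splits off, giving an essential sphere rather than a torus), while $k=0$, that is $e_1=-e$, returns a braid conjugate to $(\sigma_1\sigma_2^2\sigma_1)^{e}\sim\sigma_1^{2e}\sigma_2^{2e}$, a connected sum covered by (iii). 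Verifying that every parameter value in (i)--(iii) is accounted for in this way---so that none is left without an essential torus, or in the degenerate sub-case without the essential sphere noted in the statement---is the one place where genuine care is required.
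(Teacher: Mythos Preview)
Your route is genuinely different from the paper's. Rather than producing explicit conjugacies in $B_3$, the paper passes to the quotient $\rho:B_3\to\mathbb{Z}_2\ast\mathbb{Z}_3$ (sending $\sigma_1\mapsto y^2\Delta$, $\sigma_2\mapsto\Delta y^2$, with kernel the centre $\langle\Delta^2\rangle$), computes $\rho(w_1)$ and $\rho(w_2)$, and runs a four-case analysis on the signs of $e_1,f_1$ using that conjugacy in a free product is decided by cyclically reduced words. Because $\ker\rho$ is central, this in effect detects exactly when $w_1$ is conjugate to \emph{some} word of the form $\Delta^{2a}\sigma_2^{b}$, which is the same reduction your identity $\sigma_1\sigma_2^2\sigma_1=\Delta^2\sigma_2^{-2}$ achieves by hand. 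What each buys: the paper's method is tailored to showing that \emph{no other} $(e_1,f_1)$ can arise (the input needed for the subsequent hyperbolicity theorem), while your explicit conjugacies make the existence of the companion torus in (i) and (ii) immediately visible; the two arguments are complementary rather than competing.

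Your boundary bookkeeping, however, contains a false step. You assert that the degenerate sub-case $k=0$ yields $(\sigma_1\sigma_2^2\sigma_1)^e\sim\sigma_1^{2e}\sigma_2^{2e}$, to be absorbed by (iii). This holds only for $e=\pm1$; for $|e|\ge2$ the two braids are \emph{not} conjugate. Indeed $\rho\bigl((\sigma_1\sigma_2^2\sigma_1)^e\bigr)=(y\Delta)^{2e}$, whereas the cyclic reduction of $\rho(\sigma_1^{2e}\sigma_2^{2e})=(y^2\Delta)^{2e}(\Delta y^2)^{2e}$ retains both letters $y$ and $y^2$ once $|e|\ge2$ --- this is exactly the content of the paper's Case~1 and Case~4 (and of its later Theorem~4.4). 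Hence the sub-case $f_1=0$, $e_1=-e$, $|e|\ge2$ of (i) is not covered by your reduction to (iii) and needs a separate geometric argument (for instance: in $\widehat{\Delta^{2e}\sigma_2^{-2e}}$ strands $2$ and $3$ cobound an annulus disjoint from strand $1$, and the boundary of a regular neighbourhood of that annulus is the desired essential torus when $|e|\ge2$).
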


We give some examples, see Figure \ref{fig:trenzanillo}.

\begin{center}
\begin{figure}[ht]
\includegraphics[width=10cm]{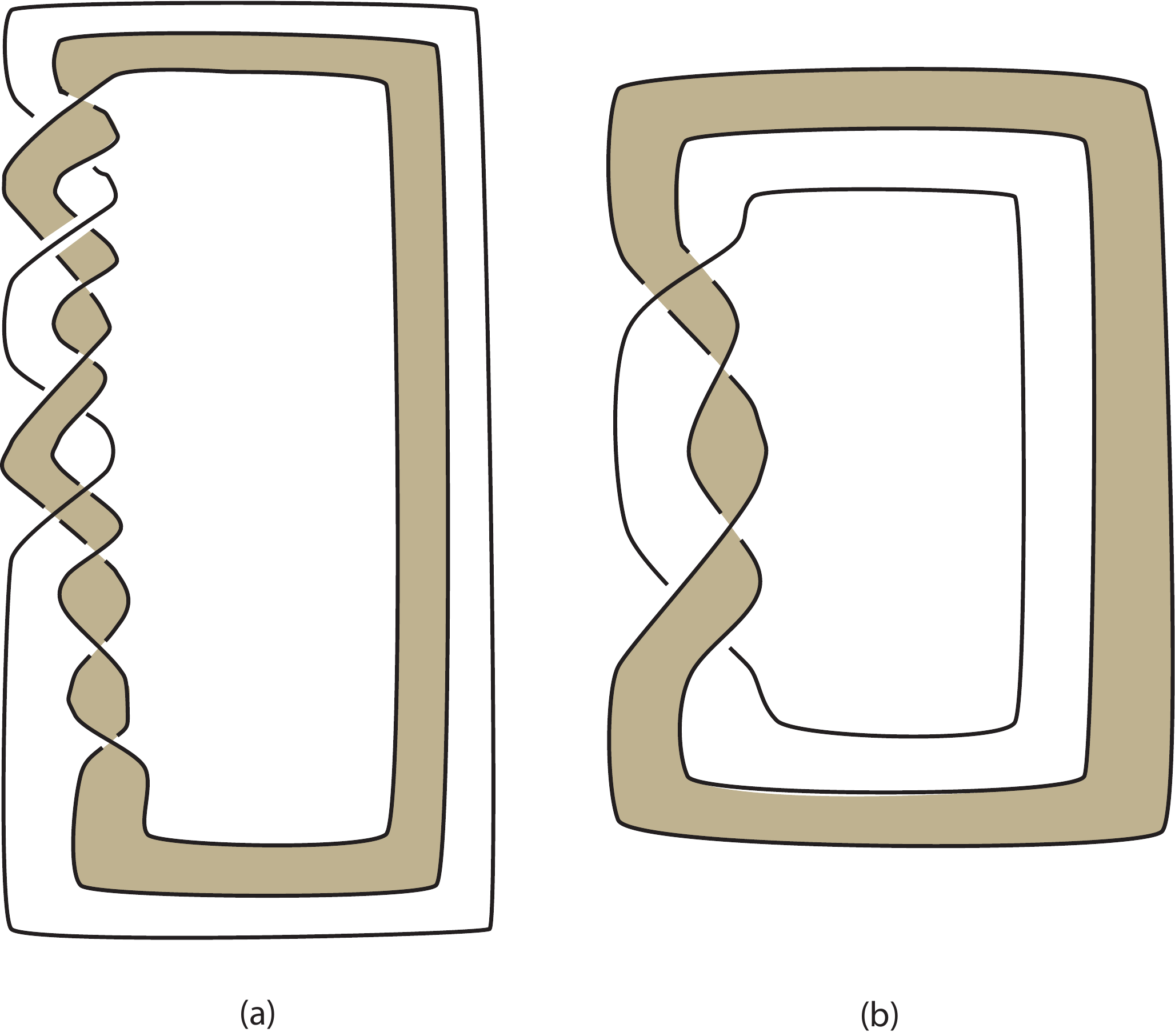} 
\caption{An annulus in the closed pure 3-braid (a) $(\sigma_2\sigma_1\sigma_2)^4\sigma_2^{-2}$;\hskip10pt(b) $ (\sigma_2\sigma_1\sigma_2)^2$.}
\label{fig:trenzanillo}
\end{figure}
\end{center}

To prove the next theorem we have the following.

\vskip 10pt

\begin{propst} Let $w = \prod _{i=1}^n (y^2\Delta)^{2e_i}(\Delta y^2)^{2f_i}$
be a word in $\mathbb{Z}_2\langle\Delta\rangle* \mathbb{Z}_3\langle y \rangle$ with $e_i,f_i\neq 0$ for all $i$. The reduced form of $w$ decomposes as
$w = \alpha\beta\gamma$
where length$(\beta)\geq 1$ and $\alpha \in \{ \Delta y, y^2\Delta\}$, $\gamma\in \{ y\Delta, \Delta y^2\}$.
\end{propst}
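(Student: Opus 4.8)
The plan is to work in the normal form of the free product $\mathbb{Z}_2\langle\Delta\rangle*\mathbb{Z}_3\langle y\rangle$, in which every element is uniquely an alternating product of nontrivial syllables, one type being $\Delta$ and the other being $y$ or $y^2$, and to control how the blocks $(y^2\Delta)^{2e_i}$ and $(\Delta y^2)^{2f_i}$ fit together when multiplied. First I would record the reduced form of a single block. Using $(y^2\Delta)^{-1}=\Delta y$ and $(\Delta y^2)^{-1}=y\Delta$ there are four cases: for $e>0$ the word $(y^2\Delta)^{2e}=y^2\Delta y^2\Delta\cdots y^2\Delta$ has $4e$ syllables and both begins and ends with $y^2\Delta$; for $e<0$ one has $(y^2\Delta)^{2e}=(\Delta y)^{2|e|}$, which begins and ends with $\Delta y$; symmetrically $(\Delta y^2)^{2f}$ begins and ends with $\Delta y^2$ when $f>0$, and equals $(y\Delta)^{2|f|}$, beginning and ending with $y\Delta$, when $f<0$. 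The three features I want are: every block has at least four syllables (since $e_i,f_i\neq0$), all of a single block's $y$-syllables are equal, and the first two and the last two syllables of a block depend only on the sign of its exponent, lying in $\{y^2\Delta,\ \Delta y\}$ and in $\{\Delta y^2,\ y\Delta\}$ respectively.

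The core of the argument would be a bounded-cancellation statement. In $w$ the blocks alternate between the $a$-type $(y^2\Delta)^{2e}$ and the $b$-type $(\Delta y^2)^{2f}$, and I claim that at each of the $2n-1$ junctions the reduction removes or alters at most two syllables on each side and never reaches farther. This is a short finite check keyed to the single-block analysis: a block ending in $\Delta$ must be an $a$-block with positive exponent or a $b$-block with negative exponent, while a block beginning with $\Delta$ must be an $a$-block with negative exponent or a $b$-block with positive exponent; hence, the instant a junction produces a genuine cancellation $\Delta\cdot\Delta=1$, the freshly exposed pair of syllables are $y$-powers with the same exponent — $y^2$ against $y^2$, or $y$ against $y$ — so they merge to $y^{\pm2}\neq1$ and the reduction halts. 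If a junction instead begins with two $y$-syllables they merge at once, and if it begins with a $\Delta$ facing a $y$-power nothing cancels. Because every block contributes at least four syllables, cancellation at one junction can therefore never propagate through an entire block into the adjacent junction.

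To assemble the proof I would write $w=(y^2\Delta)^{2e_1}(\Delta y^2)^{2f_1}\cdots(y^2\Delta)^{2e_n}(\Delta y^2)^{2f_n}$. The first block carries a junction only on its right, so the bounded-cancellation statement guarantees that its first two syllables are never disturbed during the reduction of $w$; by the single-block analysis they form $\alpha$, which lies in $\{y^2\Delta,\ \Delta y\}$ according to the sign of $e_1$. Symmetrically the last block $(\Delta y^2)^{2f_n}$ has a junction only on its left, so its last two syllables survive and give $\gamma\in\{\Delta y^2,\ y\Delta\}$ according to the sign of $f_n$. Finally, to see that the middle word is nonempty I would count: the $2n$ blocks contribute at least $8n$ syllables while each junction destroys at most three of them, so the reduced length of $w$ is at least $8n-3(2n-1)=2n+3\ge5$; since $\alpha$ and $\gamma$ together use four syllables, $\mathrm{length}(\beta)\ge1$. (The same bookkeeping can instead be run as an induction on $n$, the base case $n=1$ being exactly the four-case computation.)

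The step I expect to be the main obstacle is the bounded-cancellation statement — specifically, ruling out the possibility that the reduction at one junction swallows an entire intermediate block and then interacts with the next junction. What rules this out is the observation that a $\Delta\cdot\Delta$ cancellation is always immediately followed by a merge (never by a further cancellation), together with the uniform bound that each block has at least four syllables.
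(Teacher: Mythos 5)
Your proof is correct and rests on the same finite check as the paper's: the paper argues by induction on $n$, with its Table~\ref{tab:table_1a} playing exactly the role of your junction analysis --- the key point in both being that a $\Delta\cdot\Delta$ cancellation exposes two equal $y$-powers, which merge to a nontrivial syllable so that reduction halts after one step and never penetrates more than two syllables into a block. Your direct, non-inductive packaging, with the explicit syllable count $8n-3(2n-1)=2n+3\ge 5$ to guarantee $\mathrm{length}(\beta)\ge 1$, is if anything slightly more careful than the paper's inductive version, but it is essentially the same argument.
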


\begin{proof} We proceed by induction. For $n= 1$ is clearly true. Suppose that it is valid for $n=m$, we will prove it for $n= m+1$.

$$w = \prod_{i=1}^{m+1} (y^2\Delta)^{2e_i}(\Delta y^2)^{2f_i}$$

$$= w' (y^2\Delta)^{2e_{m+1}}(\Delta y^2)^{2f_{m+1}}$$ by induction hypothesis we know that the reduced form of $w'$ is $w' =\alpha \beta \gamma$ where $\gamma = y\Delta$ or $\gamma\Delta y^2$. We also know that $w'' = (y^2\Delta )^{2e_{m+1}}(\Delta y^2)^{2f_{m+1}}$
has the form $\alpha' \beta ' \gamma '$. So, $w = (\alpha \beta \gamma)(\alpha ' \beta ' \gamma ')$.

\vskip 10pt

\begin{table}[!ht]
    \centering
    \begin{tabular}
    {c|c|c|}
      $\gamma \times \alpha '$  & $\alpha' = \Delta y$ &  $\alpha' = y^2\Delta$\\
     \hline
     $\gamma = y\Delta $ & $y^2$ & $\Delta y^2\Delta y$ \\
     \hline
     $ \gamma = \Delta y^2$ & $y\Delta y^2 \Delta$ & $\Delta y \Delta$\\
     \hline
    \end{tabular}
    \caption{Simplifications of the possible products between $\gamma$ and $\alpha'$}
    \label{tab:table_1a}
\end{table}

But, by analyzing  all the four possibilities for $\gamma$ and $\alpha '$ we can see that $\gamma\alpha '$ at most reduces to $y^2$, but even on this case, $\beta y^2\beta '$ does not reduce any further because the $y^2$ reduction occurs in the case where $\gamma = y\Delta$ and $\alpha ' =\Delta y$, so $\beta$ must ends with  $\Delta$ and $\beta '$ starts with $\Delta$. So $\beta y^2\beta'$ is in its reduced form. So, $ w = \alpha \beta y^2 \beta' \gamma'$ decomposes as we say it would. This concludes the proof.

\end{proof}

\begin{thm} Let $w= \prod_{i=1}^n (y^2\Delta)^{2e_i}(\Delta y^2)^{2f_i} $ be a word with $e_i,f_i \neq 0 $ 
for all $i$. Then $w$ is cyclically reduced equal to
$(y^2\Delta)^{2k}$ or $(\Delta y)^{2k}$ if and only if $e_i = f_i = \pm 1 $ for all $i$.
\end{thm}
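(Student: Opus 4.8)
The plan is to prove both directions of the equivalence, where the reverse direction is an easy direct computation and the forward direction is where the real content lies.

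For the reverse direction, suppose $e_i = f_i = \pm 1$ for all $i$. Then each factor $(y^2\Delta)^{\pm 2}(\Delta y^2)^{\pm 2}$ simplifies: recalling that $(y\Delta)^{-1} = \Delta y^2$ and $y\Delta y^2 \sim \Delta$ (the relations already used repeatedly in the case analysis preceding Theorem~2.8), one computes that $(y^2\Delta)^{2}(\Delta y^2)^{2} = (y^2\Delta)(y)(\Delta y^2)^{?}\cdots$ collapses — more precisely I would verify directly that each of the four sign-combinations $(\pm,\pm)$ yields a power of $y^2\Delta$ (or its inverse $y\Delta$), exactly as in the displayed computation of $\rho(w_2)$ near the start of the section. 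Concatenating $n$ such factors and cyclically reducing then yields $(y^2\Delta)^{2k}$ or $(\Delta y)^{2k}$ for an appropriate $k$. This is bookkeeping with the two relations, nothing subtle.

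For the forward direction I would argue by contrapositive: assume that some $e_j$ or $f_j$ has absolute value $\geq 2$, and show that the cyclically reduced form of $w$ cannot be a power of $y^2\Delta$ or of $y\Delta$. Here the Proposition (2.7) is the essential tool. First I would use it to establish that the \emph{reduced} form of $w$ has the shape $\alpha\beta\gamma$ with $\mathrm{length}(\beta)\geq 1$, $\alpha\in\{\Delta y, y^2\Delta\}$, $\gamma\in\{y\Delta,\Delta y^2\}$, so that when we cyclically reduce, the only possible cancellation is between $\gamma$ and $\alpha$ (via Table~\ref{tab:table_1a}), and by the argument inside the proof of the Proposition this cancellation never propagates past a single $y^2$. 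Consequently the cyclically reduced word is obtained from $w$ by replacing at most the junction $\gamma\alpha$; its "interior" still records the exponents $e_i,f_i$ faithfully. The plan is then to track a specific local pattern: each block $(y^2\Delta)^{2e_i}$ with $|e_i|\geq 2$ forces a subword $y^2\Delta y^2\Delta$ (if $e_i\geq 2$) or $\Delta y\Delta y$ (if $e_i\leq -2$), i.e.\ a run in which the pattern $\Delta,y^2$ (resp.\ $\Delta, y$) repeats at least twice in a row with no intervening switch; the analogous statement holds for the $f_i$-blocks. But in $(y^2\Delta)^{2k}$ the syllables alternate strictly $y^2,\Delta,y^2,\Delta,\dots$ with every $\Delta$ flanked by $y^2$'s, and likewise $(y\Delta)^{2k}$ has every $\Delta$ flanked by $y$'s. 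So I would show that any $|e_i|$ or $|f_i|\geq 2$ produces a syllable adjacency — a $\Delta$ with a $y$ on one side and a $y^2$ on the other, or two consecutive $\Delta$'s after reduction — that is incompatible with these alternating normal forms, and that (again by the Proposition) this incompatibility survives cyclic reduction because it occurs in $\beta$, away from the sole junction where cancellation can happen.

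The main obstacle will be the careful case analysis at the \emph{interfaces} between successive blocks $(\Delta y^2)^{2f_i}$ and $(y^2\Delta)^{2e_{i+1}}$, and between $(y^2\Delta)^{2e_i}$ and $(\Delta y^2)^{2f_i}$ within one block: there the adjacent syllables $y^2\cdot y^2 = y$ and $\Delta\cdot\Delta = 1$ do simplify, so I must check that even after these mandatory internal simplifications a ``bad'' pattern remains visible whenever some exponent has absolute value $\geq 2$, while when all exponents are $\pm 1$ the simplifications cascade all the way down to a clean power. Organizing this as: (1) reduce one block $(y^2\Delta)^{2e_i}(\Delta y^2)^{2f_i}$ to normal form in terms of the signs and magnitudes of $e_i,f_i$ — four sign cases mirroring Cases 1--4 above; (2) invoke Proposition~2.7 to glue blocks with controlled interaction; (3) cyclically reduce and read off that a power of $y^2\Delta$ or $y\Delta$ forces every magnitude to be $1$ — should make the argument manageable, with step (1) being the computational heart and step (2) the structural lemma that prevents the gluing from destroying the local obstruction.
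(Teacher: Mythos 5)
Your architecture is the same as the paper's: reduce a single block $(y^2\Delta)^{2e_i}(\Delta y^2)^{2f_i}$ to normal form in four sign cases, invoke the Proposition to show that adjacent blocks interact only through a one-syllable-deep cancellation at the junction, and then cyclically reduce. But as written the plan has genuine gaps. In the forward direction your contrapositive only treats the case that some $|e_j|$ or $|f_j|$ is at least $2$; the negation of the conclusion also contains the case where every exponent is $\pm1$ but $e_j=-f_j$ for some $j$, and the case where the common sign varies with $i$. The paper disposes of the opposite-sign case separately: by its table of block normal forms, $e_jf_j<0$ gives $(y^2\Delta)^{a+1}(y\Delta)^{b+1}$ or $(\Delta y)^{a+1}(\Delta y^2)^{b+1}$ with $a,b\ge 1$, which leaves both a $y$ and a $y^2$ in the interior of $w$, so $w$ cannot be a pure power. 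You must add these cases. Relatedly, your reverse direction proves a false statement if the sign of $e_i=f_i$ is allowed to depend on $i$: for $e_1=f_1=1$, $e_2=f_2=-1$ one gets $w=y^2\Delta y\Delta y^2\cdot\Delta y\Delta y^2\Delta y\Delta$, which is already cyclically reduced and contains both $y$ and $y^2$ syllables. The theorem must be read, as the last line of the paper's proof makes explicit, with a uniform sign ($e_i=f_i=1$ for all $i$, or $e_i=f_i=-1$ for all $i$), and your concatenation step then still needs the junction computation $y^2\cdot y^2=y$ (resp.\ $y\cdot y=y^2$) to turn the boundary syllables of adjacent blocks into the correct type; it is not true that each block is literally a power of $y^2\Delta$ or $\Delta y$ before this merging.

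The second problem is that the local obstruction you propose for $|e_i|\ge 2$ is not an obstruction. A block $(y^2\Delta)^{2e_i}$ with $e_i\ge 2$ producing the repeated pattern $y^2,\Delta,y^2,\Delta$ is perfectly compatible with $(y^2\Delta)^{2k}$, which is exactly such a repetition. The correct invariant, which is what the paper's Table \ref{tab:table_2a} isolates, is the \emph{coexistence} of $y$ and $y^2$ syllables: a power of $y^2\Delta$ contains only $y^2$'s and a power of $\Delta y$ only $y$'s, while the reduced form of every block contains at least one of each (for $e_if_i>0$ it is $(y^2\Delta)^a\,y\,(\Delta y^2)^b$ with $a=2|e_i|-1$, $b=2|f_i|-1$). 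Since junction and cyclic cancellations penetrate only one syllable into each block, the lone $y$ in the middle can be absorbed into a word with a single syllable type only when $a=b=1$, i.e.\ $|e_i|=|f_i|=1$. Your remark about ``a $\Delta$ with a $y$ on one side and a $y^2$ on the other'' points at this, but the argument has to be organized around which syllable types survive into the interior, not around the length of runs.
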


\begin{proof} We already proved this for the case
$n=1$. We will show it when $n >1$.
So, $w= (y^2\Delta)^{2e_1}(\Delta y^2)^{2f_1}w'$.
As we proved,  $w' = \alpha \beta \gamma$ where 
$\alpha = (y^2\Delta)^{\pm 1}$
and $\gamma = (\Delta y^2)^{\pm1}$ is in reduced form. Observe that
$$ \eta = (y^2\Delta)^{2e_i}(\Delta y^2)^{2f_i}$$
reduces in four different forms depending on the signs of $e_i, f_i$.  

\begin{table}[H]
    \centering
    \begin{tabular}
    {c|c|c|}
       & $e_i > 0 $ & $e_i < 0$ \\
     \hline
     $f_i > 0$ & $(y^2\Delta)^a y (\Delta y^2)^b$ & $(\Delta y)^{a+1} (\Delta y^2 )^{b+1}$ \\
     \hline
     $f_i < 0$ & $( y^2 \Delta)^{a+1}(y\Delta)^{b+1}$ & $(\Delta y )^a \Delta y^2 \Delta (y\Delta)^b$\\
     \hline
    \end{tabular}
    \caption{ Reduce form of $\eta =  (y^2\Delta)^{2e_i}(\Delta y^2)^{2f_i}$. For simplification we set $a = 2|e_i| -1$ and $b = 2|f_i| -1$. Notice that $a, b > 0$ .}
    \label{tab:table_2a}
\end{table}

By looking at the options in \ref{tab:table_2a}, we notice that the reduced form of $\eta$ has $y$ and $y^2$ on always. So, if we want it to be cyclically reduced equal to $(\Delta y)^{2k}$ or $(y^2\Delta)^{2k}$ we need only to have one, so we must reduce $\eta$ cyclically, $w'$ has the form $$w' = (y^2\Delta)^{\pm 1}\beta (\Delta y^2)^{\pm 1}$$.
Analyzing all the possibilities, we observe that when $e_i$ and $f_i$ have opposite  signs, $y$, and $y^2$ will survive after a cyclical reduction of $w$. Because at most one $y$ or $y^2$ is canceled (or transformed) on each side of $\eta$ (left and right)
as observed in Table \ref{tab:table_1a}.
But in \ref{tab:table_2a}, we can observe that $\eta$ has two $y$'s and two $y^2$'s when
$e_if_i< 0$ and when $e_if_i > 0$ it has one $y$ in the middle surrounded by $y^2$
on both sides, as we can only cancel at most two $y^2$
the only way that this will work is when $e_i = f_i =1$.
Analogously, if $e_i f_i < 0$
can be reduced cyclically eliminating the $y$'s at the sides of $\eta$  and leaving only $y^2$'s if $e_i = f_i = -1$.
Observe that, when $e_i = f_i = 1$ the reduced word
has only $y$'s and when $e_i = f_i = -1$ the reduced word of $\eta$ has only $y^2$'s.
So, we apply this argument for $(y^2\Delta)^{2e_i}(\Delta y^2)^{2f_i}$ to obtain that $e_i = f_i = \pm 1$.

As we want only $y$'s or $y^2$'s we conclude that either $e_i = f_i = 1$
for all $i$ or $e_i = f_i = -1$ for all $i$.

\end{proof}

\begin{thm}
 Let $\beta$ be the following pure $3$-braid
$$ \beta =  \prod _{i=1}^n\sigma_1^{2e_i}\sigma_2^{2f_i}(\sigma_1\sigma_2\sigma_1)^{2e}$$

\noindent where $e_i, f_i, e$ any integers.
Then $\hat \beta$  is hyperbolic except if we have: 

i) Some of the braids in Theorem $4.2$

ii) $e_i = 0$ or $f_i = 0$ for all $i$ and $e = 0$ in which case $\hat \beta$ is splittable.

iii)  $e_i = f_i = 1$ for all $i$

iv)  $e_i = f_i = -1$  for all $i$
  
\end {thm}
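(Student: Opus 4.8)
The plan is to apply the hyperbolization criterion recalled in Section~\ref{intro}: the link exterior $M=S^{3}\setminus\hat\beta$ is hyperbolic as soon as it contains no essential sphere, disk, annulus, or torus. So I must show that if $M$ carries one of these surfaces then $\beta$ is one of the braids listed in (i)--(iv), and conversely that each braid in (i)--(iv) gives a non-hyperbolic $\hat\beta$.

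The torus case is the heart of the matter and is where Corollary~4.1 and Theorem~4.4 enter. Suppose $M$ contains an essential torus. By Corollary~4.1 and the purity of $\beta$, $\beta$ is conjugate in $B_{3}$ to $(\sigma_{2})^{2q}(\sigma_{1}\sigma_{2}^{2}\sigma_{1})^{2k}$. I push this through the epimorphism $\rho\colon B_{3}\to\Z_{2}\langle\Delta\rangle*\Z_{3}\langle y\rangle$. Since $\rho(\Delta^{2})=1$, the central factor is invisible: $\rho(\beta)=\rho\bigl(\prod_{i=1}^{n}\sigma_{1}^{2e_{i}}\sigma_{2}^{2f_{i}}\bigr)=\prod_{i=1}^{n}(y^{2}\Delta)^{2e_{i}}(\Delta y^{2})^{2f_{i}}$, while the computation carried out earlier in this section gives $\rho\bigl((\sigma_{2})^{2q}(\sigma_{1}\sigma_{2}^{2}\sigma_{1})^{2k}\bigr)=(y\Delta)^{4k-2q}$. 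Since $\rho$ is a homomorphism, $\rho(\beta)$ is conjugate in $\Z_{2}*\Z_{3}$ to a power of $y\Delta$, and conjugate elements of a free product have the same cyclically reduced word up to cyclic permutation \cite[Theorem~1.4]{MKS}. Now I combine the syllables of $\prod_{i}\sigma_{1}^{2e_{i}}\sigma_{2}^{2f_{i}}$ across any vanishing exponent and rotate cyclically: either the resulting word is $1$ or a power of a single $\sigma_{j}^{2}$, in which case $\beta$ is conjugate to a small-case braid $\sigma_{j}^{2m}\Delta^{2e}$ (or $\Delta^{2e}$) and Theorem~4.2 places $\hat\beta$ in case (i), specializing to case (ii) when $e=0$; or the word is a genuine alternating product $\prod_{j=1}^{m}\sigma_{1}^{2e'_{j}}\sigma_{2}^{2f'_{j}}$ with $m\ge1$ and all $e'_{j},f'_{j}\neq0$, so that the cyclically reduced word of $\rho(\beta)$ equals $\prod_{j=1}^{m}(y^{2}\Delta)^{2e'_{j}}(\Delta y^{2})^{2f'_{j}}$. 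Since this is a power of $y\Delta$, Theorem~4.4 forces $e'_{j}=f'_{j}=1$ for all $j$, or $e'_{j}=f'_{j}=-1$ for all $j$, which is case (iii) or case (iv).

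The remaining surfaces are handled with known facts about closed $3$--braids. If $M$ has an essential sphere or disk then $\hat\beta$ is split; if $M$ has an essential annulus but is atoroidal, then $\hat\beta$ is composite or a cable, or $M$ is Seifert fibered, and a Seifert fibered closed $3$--braid exterior is a torus link. In each case the Birman--Menasco classifications of split, composite, and Seifert fibered closed $3$--braids \cite{BM94} show that $\beta$ is conjugate to one of the small-case braids $\sigma_{j}^{2m}\Delta^{2e}$, $\Delta^{2e}$, or $\sigma_{1}^{2e_{1}}\sigma_{2}^{2f_{1}}$ with $e=0$; Theorem~4.2 then puts $\hat\beta$ in case (i), or in case (ii) when it is split. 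For the converse, cases (i) and (ii) are Theorem~4.2 and the split description above, and in cases (iii) and (iv) the identity $\sigma_{1}\sigma_{2}^{2}\sigma_{1}=\Delta^{2}\sigma_{2}^{-2}$, which follows from $\sigma_{1}\sigma_{2}^{2}\sigma_{1}\sigma_{2}^{2}=(\sigma_{1}\sigma_{2})^{3}=\Delta^{2}$, together with centrality of $\Delta^{2}$ gives $\beta=(\sigma_{1}^{2}\sigma_{2}^{2})^{n}\Delta^{2e}\sim\sigma_{2}^{-2n}\Delta^{2(e+n)}$ and $\beta=(\sigma_{1}^{-2}\sigma_{2}^{-2})^{n}\Delta^{2e}\sim\sigma_{2}^{2n}\Delta^{2(e-n)}$ respectively --- small-case braids without $\sigma_{1}$ --- so $\hat\beta$ is non-hyperbolic by Theorem~4.2(i), or is split when the exponent of $\Delta^{2}$ vanishes.

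I expect the essential-annulus and Seifert-fibered step to be the main obstacle: Corollary~4.1 only detects essential \emph{tori}, so making this part fully rigorous requires the finer Birman--Menasco classification of split, composite, and Seifert fibered closed $3$--braids from \cite{BM94}; one also has to verify that combining the zero-exponent syllables of $\prod_{i}\sigma_{1}^{2e_{i}}\sigma_{2}^{2f_{i}}$ and reducing cyclically introduces no spurious exceptions to cases (i)--(iv). The remaining ingredients --- the evaluation of $\rho$ and the appeal to Theorem~4.4 --- are routine.
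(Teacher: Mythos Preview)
Your treatment of the torus case is exactly the argument the paper assembles in Theorems~4.2 and~4.4: push conjugacy through $\rho$, compute $\rho(\beta)=\prod_i(y^2\Delta)^{2e_i}(\Delta y^2)^{2f_i}$ and $\rho\bigl((\sigma_2)^{2q}(\sigma_1\sigma_2^2\sigma_1)^{2k}\bigr)=(y\Delta)^{4k-2q}$, and invoke the cyclic--reduction criterion to force $e_i=f_i=\pm1$ (after absorbing any zero exponents). Where you diverge is in the handling of essential annuli. You bring in the finer Birman--Menasco classification of split, composite, and Seifert fibered closed $3$--braids to identify $\beta$ directly in those cases. The paper instead uses a shortcut that stays entirely within the toroidal analysis: given an essential annulus $A$, take a regular neighbourhood in $S^{3}$ of $A$ together with the link component(s) meeting $\partial A$; its frontier is a torus in $S^{3}\setminus\hat\beta$, which the paper asserts is essential, thereby reducing the annular case to the toroidal case already settled by Corollary~4.1 and Theorem~4.4. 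The sphere case is dismissed in one line (an essential sphere means $\hat\beta$ is split).

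Your route is more carefully justified --- you rightly anticipate that the frontier torus produced by the neighbourhood trick might fail to be essential when $M$ is small Seifert fibered, and you cover that possibility explicitly --- at the price of importing more from \cite{BM94} than the paper does. The paper's route is shorter and needs nothing beyond Corollary~4.1, but leaves the essentiality of the frontier torus to the reader. Both arguments are valid; if you want to match the paper, replace your annulus paragraph with the neighbourhood construction and add a sentence explaining why compressibility or boundary--parallelism of that torus would contradict the essentiality of $A$.
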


\begin{proof} We show that $\hat \beta$ does not have an  essential annulus or sphere.
For the annulus, it is enough to analyze when its boundary is in different or in the same component of the link.
It is enough to take a neighborhood of the annulus and the link, and then we get an essential torus.
If there is an essential sphere then the link is splittable. 
\end{proof}






\textbf{Acknowledgement.} The first author is supported by a fellowship \textit{Investigadoras por M\'exico} from CONAHCyT. Research partially supported by the grant PAPIIT-UNAM 116720.


\bibliographystyle{alpha}
\bibliography{TrivialGroups1/bibliography}

\begin{thebibliography}{ASEMn09}

\bibitem[AS04]{AS04}
Lorena Armas-Sanabria.
\newblock The {$\mu$} invariant and the {C}asson invariant of 3-manifolds
  obtained by surgery on closed pure 3-braids.
\newblock {\em J. Knot Theory Ramifications}, 13(3):427--440, 2004.

\bibitem[AS12]{AS12}
Lorena Armas-Sanabria.
\newblock Artin presentations and fundamental groups of 3-manifolds.
\newblock {\em Topology Appl.}, 159(4):990--998, 2012.

\bibitem[ASEMn09]{AE09}
Lorena Armas-Sanabria and Mario Eudave-Mu\~{n}oz.
\newblock The hexatangle.
\newblock {\em Topology Appl.}, 156(6):1037--1053, 2009.

\bibitem[BM94]{BM94}
Joan~S. Birman and William~W. Menasco.
\newblock Special positions for essential tori in link complements.
\newblock {\em Topology}, 33(3):525--556, 1994.

\bibitem[Con70]{Co69}
J.~H. Conway.
\newblock An enumeration of knots and links, and some of their algebraic
  properties.
\newblock In {\em Computational {P}roblems in {A}bstract {A}lgebra ({P}roc.
  {C}onf., {O}xford, 1967)}, pages 329--358. Pergamon, Oxford, 1970.

\bibitem[GA75]{GA75}
Francisco~J. González-Acuña.
\newblock 3-dimensional open books.
\newblock Lectures Notes, 1975.
\newblock Univ. of Iowa Topology Seminar.

\bibitem[GA00]{GA00}
Francisco~J. González-Acuña.
\newblock unpublished notes.
\newblock UNAM, 2000.

\bibitem[Kaw96]{Ka96}
Akio Kawauchi.
\newblock {\em A survey of knot theory}.
\newblock Birkh\"{a}user Verlag, Basel, 1996.
\newblock Translated and revised from the 1990 Japanese original by the author.

\bibitem[MKS66]{MKS}
Wilhelm Magnus, Abraham Karrass, and Donald Solitar.
\newblock {\em Combinatorial group theory: {P}resentations of groups in terms
  of generators and relations}.
\newblock Interscience Publishers [John Wiley \& Sons], New York-London-Sydney,
  1966.

\bibitem[Mon75]{M75}
Jos\'{e}~M. Montesinos.
\newblock Surgery on links and double branched covers of {$S^{3}$}.
\newblock In {\em Knots, groups, and {$3$}-manifolds ({P}apers dedicated to the
  memory of {R}. {H}. {F}ox)}, Ann. of Math. Studies, No. 84, pages 227--259.
  Princeton Univ. Press, Princeton, N.J., 1975.

\end{thebibliography}

\end{document}